\newtheorem{thm}{Theorem}
\newtheorem{cor}[thm]{Corollary}
\newtheorem{lem}[thm]{Lemma}
\newtheorem{prop}[thm]{Proposition}
\newtheorem{claim}[thm]{Claim}
\newtheorem{fact}[thm]{Fact}
\newtheorem{defn}[thm]{Definition}
\theoremstyle{definition}
\newcommand{\rr}{\mathbb{R}}
\newcommand{\nn}{\mathbb{N}}
\newcommand{\ee}{\varepsilon}
\newcommand{\suc}{\mathrm{Succ}}
\newcommand{\immsuc}{\mathrm{ImmSucc}}
\newcommand{\dens}{\mathrm{dens}}
\newcommand{\fan}{\mathrm{Fan}}
\newcommand{\dhl}{\mathrm{DHL}}
\newcommand{\hl}{\mathrm{HL}}
\newcommand{\strong}{\mathrm{strong}}
\newcommand{\ci}{\mathrm{I}}
\newcommand{\mhden}{\mathbf{0}}
\newcommand{\bfp}{\mathbf{p}}
\newcommand{\bfr}{\mathbf{r}}
\newcommand{\bfs}{\mathbf{s}}
\newcommand{\bft}{\mathbf{t}}
\newcommand{\bfu}{\mathbf{u}}
\newcommand{\bfv}{\mathbf{v}}
\newcommand{\bfz}{\mathbf{z}}
\newcommand{\bfcb}{\mathbf{B}}
\newcommand{\bfci}{\mathbf{I}}
\newcommand{\bfcf}{\mathbf{F}}
\newcommand{\bfcr}{\mathbf{R}}
\newcommand{\bfcs}{\mathbf{S}}
\newcommand{\bfct}{\mathbf{T}}
\newcommand{\bfcu}{\mathbf{U}}
\newcommand{\bfcv}{\mathbf{V}}
\newcommand{\bfcz}{\mathbf{Z}}
\newcommand{\meg}{\geqslant}
\newcommand{\mik}{\leqslant}
\newcommand{\lex}{<_{\mathrm{lex}}}
\newcommand{\con}{\smallfrown}
\begin{document}

\title{A density version of the Halpern--L\"{a}uchli theorem}

\author{Pandelis Dodos, Vassilis Kanellopoulos and Nikolaos Karagiannis}

\address{Department of Mathematics, University of Athens, Panepistimiopolis 157 84, Athens, Greece}
\email{pdodos@math.uoa.gr}

\address{National Technical University of Athens, Faculty of Applied Sciences,
Department of Mathematics, Zografou Campus, 157 80, Athens, Greece}
\email{bkanel@math.ntua.gr}

\address{National Technical University of Athens, Faculty of Applied Sciences,
Department of Mathematics, Zografou Campus, 157 80, Athens, Greece}
\email{nkaragiannis@math.ntua.gr}

\thanks{2000 \textit{Mathematics Subject Classification}: 05D10, 05C05.}
\thanks{\textit{Key words}: trees, strong subtrees, level product, density.}

\maketitle


\begin{abstract}
We prove a density version of the Halpern--L\"{a}uchli Theorem. This settles in the affirmative a conjecture of R. Laver.

Specifically, let us say that a tree $T$ is homogeneous if $T$ has a unique root and there exists an integer $b\meg 2$ such that every
$t\in T$ has exactly $b$ immediate successors. We show that for every $d\meg 1$ and every tuple $(T_1,...,T_d)$ of homogeneous trees, 
if $D$ is a subset of the level product of $(T_1,...,T_d)$ satisfying
\[ \limsup_{n\to\infty}
\frac{|D\cap \big( T_1(n)\times ... \times T_d(n)\big)|}{|T_1(n)\times ... \times T_d(n)|}>0\]
then there exist strong subtrees $(S_1, ..., S_d)$ of $(T_1,...,T_d)$ having common level set
such that the level product of $(S_1,...,S_d)$ is a subset of $D$.
\end{abstract}


\section{Introduction}

\subsection{Statement of the problem and the main result}

Ramsey Theory is the collection of a number of partition results asserting that for every finite coloring of a ``structure" one can find a
``substructure" which is monochromatic. In several cases, however, one can actually prove a significantly stronger \textit{density} result
asserting that every large subset of a ``structure" must contain a ``substructure". This phenomenon, investigated from the early beginnings
of Ramsey Theory, has seen some dramatic developments in recent years and, by now, there are several results in this direction. A well-known
and illuminative example is the density version of the Hales--Jewett Theorem \cite{HJ} obtained by H. Furstenberg and Y. Katznelson in \cite{FK}
(see, also, \cite{DKT2}).

The main goal of the present paper is to prove a density version of the Halpern--L\"{a}uchli Theorem \cite{HL}. The Halpern--L\"{a}uchli Theorem
is a rather deep pigeonhole principle for trees. It was discovered in 1966, three years after the discovery of the Hales--Jewett Theorem, as a
result needed for the construction of a Model of Set Theory in which the Boolean Prime Ideal Theorem is true but not the full Axiom of Choice
(see \cite{HLevy}). The original proof was based on tools from Logic; since then, other proofs have been found some of which are purely
combinatorial (see \cite[\S 3]{To2} for a detailed exposition). It has been the main tool for the development of Ramsey Theory for trees,
a rich area of Combinatorics with important applications in Functional Analysis and Topology (see, for example,
\cite{Bl,C,CS,G,Ha,Ka,La,LSV,Mi1,Mi2,PH,To2} and \cite{ADK1,ADK2,D,DK,To1} for applications).

The Halpern--L\"{a}uchli Theorem has several equivalent forms (see \cite[\S 3.1]{To2}). To proceed with our discussion it is useful
at this point to recall one of these forms, known as the ``strong subtree version of the Halpern--L\"{a}uchli Theorem".
\begin{thm} \label{t11}
For every $d\meg 1$ we have that $\hl(d)$ holds, i.e. for every tuple $(T_1,...,T_d)$ of uniquely rooted and finitely branching
trees without maximal nodes and for every finite coloring of the level product
\[  \bigcup_{n\in\nn} T_1(n)\times ...\times T_d(n) \]
of $(T_1,...,T_d)$ there exist strong subtrees $(S_1, ..., S_d)$ of $(T_1,...,T_d)$ having common level set such that the level
product of $(S_1,...,S_d)$ is monochromatic.
\end{thm}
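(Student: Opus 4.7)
The plan is to prove Theorem 1.1 by induction on $d$, after two standard preliminary reductions. First, since every uniquely rooted, finitely branching tree without maximal nodes contains a strong subtree isomorphic to a homogeneous tree (achieved by thinning out levels until every surviving node has the same number of immediate successors), we may assume each $T_i$ is homogeneous with a common branching number $b\meg 2$. Second, by a compactness argument on the space of colorings, it suffices to establish a finite quantitative version: for every $d,r,h,b$ there exists $N=N(d,r,h,b)$ such that every $r$-coloring of the level product of $d$ homogeneous $b$-branching trees of height $N$ admits strong subtrees of common height $h$ whose level product is monochromatic.

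The base case $d=1$ is a statement purely about colorings of nodes of a single homogeneous tree, and one proves it by induction on $h$: at each stage the classical Ramsey theorem, applied to a large enough collection of descendants of the current ``partial root'', produces a monochromatic family rich enough to serve as the next level of $S_1$ while leaving room for future stages. For the inductive step, one exploits $\hl(d-1)$ by viewing the $d$-dimensional coloring $c$ as follows: for each level $m$ and each $t_d\in T_d(m)$, the slice $c_{m,t_d}$ is a coloring of $T_1(m)\times\cdots\times T_{d-1}(m)$. A simultaneous fusion along the levels then constructs $S_1,\ldots,S_d$ in tandem: at each level one extends $S_d$ by carefully chosen nodes, and for the induced family of $(d{-}1)$-dimensional colorings one applies the quantitative $\hl(d-1)$ on a sufficiently inflated slab of levels to obtain compatible monochromatic extensions of $S_1,\ldots,S_{d-1}$ at a common height.

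The main obstacle is precisely the level-synchronization: the levels at which the $(d{-}1)$-dimensional inductive hypothesis produces good extensions in the first $d-1$ coordinates must coincide with a level chosen for $S_d$. A naive direct product therefore fails. The standard remedy is to plan ahead: before committing to a level for $S_d$, one uses Ramsey's theorem on the homogeneous branching of $T_d$ to locate a wide vertical slab on which the slice-colorings $c_{m,t_d}$ depend only on ``coarse'' data, and only afterwards applies $\hl(d-1)$ with parameters inflated so that the level it returns lies inside the slab. The recursion defining $N(d,r,h,b)$ in terms of $N(d-1,\cdot,\cdot,\cdot)$ and suitable Ramsey numbers encodes exactly this inflation, and an iterated fusion/diagonalization then assembles the three ingredients — thinning to homogeneous branching, Ramsey stabilization on $T_d$, and $\hl(d-1)$ on the slab — into strong subtrees $S_1,\ldots,S_d$ with common level set whose level product is monochromatic.
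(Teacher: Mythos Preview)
The paper does not give its own proof of Theorem~\ref{t11}: this is the classical Halpern--L\"auchli theorem, stated as background and cited from \cite{HL} (with combinatorial proofs referenced in \cite[\S 3]{To2}). So there is no proof in the paper to compare your proposal against.

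That said, your first reduction contains a genuine error. You assert that every uniquely rooted, finitely branching tree without maximal nodes contains a strong subtree isomorphic to a homogeneous tree, ``achieved by thinning out levels until every surviving node has the same number of immediate successors''. But condition (c) in the definition of a strong subtree forces, for each $s\in S$, the assignment $t\mapsto s'$ (sending $t\in\immsuc_T(s)$ to the unique $s'\in\immsuc_S(s)$ with $t\mik s'$) to be a bijection; hence $|\immsuc_S(s)|=|\immsuc_T(s)|$ for every $s\in S$. Strong subtrees preserve branching numbers node by node, so if $T$ is not already homogeneous then no strong subtree of $T$ is, and no amount of level-thinning changes this. (The remark in \S 4.4 of the paper that a boundedly branching vector tree admits a homogeneous vector strong subtree is stated as a \emph{consequence} of Theorem~\ref{t11}, not as a preliminary to it, and in any case says nothing about finitely-but-unboundedly branching trees, which the theorem must cover.)

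Beyond this, the inductive step you sketch is too schematic to be a proof. The synchronization problem you correctly identify is precisely the crux of $\hl(d)$, but ``Ramsey stabilization on $T_d$'' followed by an application of $\hl(d-1)$ whose output level is guaranteed to land in a preassigned slab is not something one can simply assert; making this work is the entire content of the theorem. The known combinatorial proofs (see \cite[\S 3]{To2}) operate directly on finitely branching trees and organize the fusion rather differently from what you outline.
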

We recall that a subtree $S$ of a tree $(T,<)$ is said to be \textit{strong} if: (a) $S$ is uniquely rooted, (b) there exists an
infinite subset $L_T(S)=\{l_0< l_1< ...\}$ of $\nn$, called the \textit{level set} of $S$, such that for every $n\in\nn$ the $n$-level
$S(n)$ of $S$ is a subset of $T(l_n)$, and (c) for every $s\in S$ and every immediate successor $t$ of $s$ in $T$ there exists a unique
immediate successor $s'$ of $s$ in $S$ with $t\mik s'$. The last condition is the most important one and expresses a basic combinatorial
requirement, namely that a strong subtree of $T$ must respect the ``tree structure" of $T$.  The notion of a strong subtree was highlighted
with the work of K. Milliken \cite{Mi1,Mi2} who used Theorem \ref{t11} to show that the family of strong subtrees of a uniquely rooted and
finitely-branching tree is partition regular.

The natural problem whether there exists a density version of Theorem \ref{t11} was first asked by R. Laver in the late 1960s who actually
conjectured that there is such a version. The conjecture was circulated among experts in the area and it was explicitly stated in the paper
\cite{BV} by R. Bicker and B. Voigt who made two important observations. Firstly, by providing several examples -- see, in particular,
\cite[Theorems 2.4 and 2.5]{BV} -- they isolated the largest class of trees for which a density version of Theorem \ref{t11} could be true.
This is the class of \textit{homogeneous} trees: a tree $T$ is said to be homogeneous if it has a unique root and there exists $b\meg 2$, 
called the \textit{branching number} of $T$, such that every $t$ in $T$ has exactly $b$ immediate successors. Secondly, they showed that
for a single homogeneous tree Theorem \ref{t11} does have a density version. Specifically, it was shown in \cite[Theorem 2.3]{BV} that
for every homogeneous tree $T$ and every subset $D$ of $T$ satisfying
\[ \limsup_{n\to\infty} \frac{|D\cap T(n)|}{|T(n)|}> 0\]
there exists a strong subtree $S$ of $T$ with $S\subseteq D$.

Our main result shows that a density version of Theorem \ref{t11} is valid for an arbitrary finite number of homogeneous trees and thereby
settles in the affirmative Laver's conjecture.
\begin{thm} \label{t12}
For every $d\meg 1$ we have that $\dhl(d)$ holds, i.e. for every tuple $(T_1,...,T_d)$ of homogeneous
trees and every subset $D$ of the level product of $(T_1,...,T_d)$ satisfying
\[ \limsup_{n\to\infty} \frac{|D\cap \big( T_1(n)\times ... \times T_d(n)\big)|}{|T_1(n)\times ... \times T_d(n)|}>0 \]
there exist strong subtrees $(S_1, ..., S_d)$ of $(T_1,...,T_d)$ having common level set such that
the level product of $(S_1,...,S_d)$ is a subset of $D$.
\end{thm}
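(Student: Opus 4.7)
The plan is to proceed by induction on $d$, taking the Bicker--Voigt result quoted in the introduction as the base case $d=1$. The entire content of the theorem is therefore the inductive step $\dhl(d-1)\Rightarrow \dhl(d)$, in which one must convert a positive-density hypothesis on the full $d$-fold level product into configurations that can be located via the lower-dimensional statement, while still producing strong subtrees $S_1,\dots,S_d$ sharing a common level set.

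After fixing branching numbers $b_1,\dots,b_d$ of $T_1,\dots,T_d$, I would pass to an infinite sequence of levels $(n_k)$ on which the density of $D$ inside $T_1(n_k)\times\cdots\times T_d(n_k)$ stays uniformly above some $\delta>0$. The first move is a Fubini/fibering argument in the $d$-th coordinate: for each $\bft\in T_1(n)\times\cdots\times T_{d-1}(n)$, consider the fiber $D_\bft=\{w\in T_d(n):(\bft,w)\in D\}$. A positive fraction of the tuples $\bft$ are ``heavy", meaning $|D_\bft|/b_d^n\meg \delta/2$. Applying $\dhl(d-1)$ to this heavy set of tuples yields strong subtrees $(S_1,\dots,S_{d-1})$ of $(T_1,\dots,T_{d-1})$ with common level set $L$ whose level-product nodes all have dense fibers in $T_d$. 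This by itself does \emph{not} give the conclusion, however, since the fibers $D_\bft$ need not share a common strong subtree of $T_d$.

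To coordinate the fibers I would prove a stronger, supersaturated and quantitative finite version of $\dhl(d-1)$ that outputs \emph{many} configurations $(S_1,\dots,S_{d-1})$ of prescribed finite height, then use a probabilistic pigeonhole inside $T_d$ to locate a single strong subtree $S_d$ of $T_d$ (with level set $L$) lying simultaneously in a positive fraction of the relevant fibers. Upgrading ``positive fraction" to ``all" is accomplished by iterating the construction in a fusion/Carlson--Simpson style: one commits to the $k$-th common level $l_k$ of all $S_i$ and to the finitely many nodes of $S_1,\dots,S_d$ at that level simultaneously, while preserving enough density in the tail $\bigcup_{n>l_k} T_1(n)\times\cdots\times T_d(n)$ to continue the induction indefinitely.

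The main obstacle, as I see it, is reconciling the rigidity of the strong subtree definition with the averaged nature of the density hypothesis. A strong subtree $S_i$ of $T_i$ is forced to have exactly $b_i$ immediate successors at each node, one in each cone of the ambient tree, so one is \emph{not} free to simply discard ``bad" nodes level by level; each extension step must respect the full branching structure. The delicate point is to design the fusion so that after extending by one common level the inductive hypothesis can still be applied with only a controlled loss of density, and to quantify this loss well enough to sustain the construction through infinitely many stages. A secondary but genuine technicality is that the branching numbers $b_1,\dots,b_d$ need not agree, so level sizes grow at incommensurate exponential rates and every pigeonhole count, fiber average, and supersaturation bound must be calibrated to this asymmetry.
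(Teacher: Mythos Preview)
Your inductive setup, the Fubini step, and the recognition that the dense fibers $D_\bft\subseteq T_d$ need not share any common strong subtree are all correct and match the paper exactly. The paper also runs a fusion, committing to one common level at a time. Where your proposal falls short is in the mechanism you offer for surviving a single fusion step.

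Concretely: at stage $k$ of the fusion you have a partial vector strong subtree of height $k$ in $(T_1,\dots,T_{d-1})$ and a partial strong subtree of $T_d$. To extend by one level you must, for each of the $b_d$ children of your current $T_d$-node, find a node $w'$ lying simultaneously in the fibers $D_\bft$ for \emph{every} $\bft$ in the next level of the first $d-1$ trees. The number of such $\bft$ is a full level product $\prod_i b_i^{k+1}$, and a priori the intersection $\bigcap_\bft D_\bft$ may have density far below any iterable bound. Your ``supersaturated finite version of $\dhl(d-1)$ plus probabilistic pigeonhole'' does not address this: supersaturation produces many $(d-1)$-dimensional configurations, but it gives no control on the intersection of their fibers inside $T_d$, which is what the fusion requires. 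You correctly name this obstacle in your last paragraph but do not supply an idea that overcomes it.

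The paper's actual device is a correlation lemma (their Theorem~\ref{t33}): assuming $\dhl(d-1)$, for any $\varepsilon$-dense level selection $D:\otimes\bfcs\to 2^{T_d}$ one can find a vector strong subtree $\bfcr$ of $\bfcs$, a node $w\in D(\bfcr(0))$, and a constant $\theta>0$ such that for \emph{every} vector fan $\bfcf$ of $\bfcr$ rooted at $\bfcr(0)$ and every child $w'$ of $w$, the intersection $\bigcap_{\bfr\in\otimes\bfcf(1)} D(\bfr)$ has density at least $\theta$ relative to $w'$. This is what makes one fusion step possible with a quantified, iterable loss. Its proof is by contradiction: one assumes no such $(\bfcr,w,\theta)$ exists, uses the \emph{coloring} Halpern--L\"auchli theorem $\hl\big(\sum_{i<d} b_i\big)$ (not just $\dhl(d-1)$) to build an ``asymptotically sparse'' subtree on which all the relevant intersections are provably tiny, subtracts a union of small ``forbidden'' sets to obtain a new dense level selection $B$, and then derives a contradiction by applying $\dhl(d-1)$ once more (their Proposition~\ref{p35}). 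The fusion itself is also not the standard one: the paper builds a \emph{chain} of roots in $(T_1,\dots,T_{d-1})$ together with a strong subtree of $T_d$, and only at the end recovers the full vector strong subtree via an ``unfolding'' by canonical isomorphisms. None of this machinery is visible in your outline, and the paper notes explicitly that the exponential growth of the intersections is the main difficulty, with the critical threshold at $d=3$; a softer averaging argument of the kind you sketch is known to handle $d=2$ but not to generalize.
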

Notice that the strong subtrees $S_1,...,S_d$ obtained by Theorem \ref{t12} are \textit{infinite}. This is the first result in Ramsey Theory
where a density condition yields the existence of an infinite object instead of a sequence of finite objects of arbitrarily large cardinality.

\subsection{Outline of the argument}

The proof of Theorem \ref{t12} proceeds by induction on the number of trees and is based on combinatorial tools. In particular, at the process
of establishing $\dhl(d+1)$ we use, as pigeonhole principles, $\dhl(d)$ as well as Theorem \ref{t11}. One can actually determine which instance
of Theorem \ref{t11} is needed in order to prove Theorem \ref{t12} for a fixed tuple $(T_1,..., T_{d+1})$ of homogeneous trees: if $b_i$ is the
branching number of $T_i$ for every $i\in\{1,...,d+1\}$, then one needs to use $\hl\big(\sum_{i=1}^{d} b_i\big)$.

Let us briefly discuss the main steps (for unexplained terminology and notation we refer to \S 2). Assume that we have proven $\dhl(d)$ for some
$d\meg 1$ and that we are given a tuple $(T_1,...,T_d,W)$ of homogeneous trees, a constant $0<\ee\mik 1$ and a subset $D$ of the level product of
$(T_1,...,T_d,W)$ satisfying
\[ |D\cap \big( T_1(n)\times ...\times T_d(n)\times W(n)\big)| \meg \ee |T_1(n)\times ... \times T_d(n)\times W(n)| \]
for infinitely many $n\in\nn$. Using a Fubini-type argument and $\dhl(d)$, we can find a vector strong subtree $\bfcs$ of $(T_1,...,T_d)$,
with the following property: there exists a strictly increasing sequence $(l_n)$ in $\nn$ such that for every $n\in\nn$ and every
$\bfs\in \otimes\bfcs(n)$ the section $D(\bfs)=\{w\in W: (\bfs,w)\in D\}$ of $D$ at $\bfs$ is a subset of $W(l_n)$ of cardinality at least
$\ee/2 |W(l_n)|$. This property of the section map $D:\otimes\bfcs\to 2^{W}$ is abstracted in Definition \ref{d31} in the main text. We call
such maps \textit{dense level selections}.

The next step (which is the most demanding part of the proof) is to show that for every dense level selection $D:\otimes\bfcs\to 2^W$ there exists
a vector strong subtree $\bfcr$ of $\bfcs$ such that the sets $\{D(\bfr): \bfr\in \otimes\bfcr\}$ are mutually ``correlated"; this is the content
of Theorem \ref{t33} in the main text. Precisely, we show that there exist a constant $0<\theta\mik 1$, a vector strong subtree $\bfcr$ of $\bfcs$
and a node $w\in D(\bfr)$, where $\bfr$ is the root of $\bfcr$, such that for every vector strong subtree $\bfcz$ of $\bfcr$ with the same root as
$\bfcr$ the density of the set
\[ \bigcap_{\bfz\in\otimes \bfcz(1)} D(\bfz) \]
relative to \textit{every} immediate successor $w'$ of $w$, is at least $\theta$. The main difficulty in the proof of this result lies in the fact
that the number of sets in the above intersection increases \textit{exponentially} with respect to the dimension\footnote[1]{A typical phenomenon
in the proof of several combinatorial results is that the ``low dimensional" cases are relatively easy to prove and the full complexity appears
after a critical threshold. In the case of the density Halpern--L\"{a}uchli Theorem this critical threshold is dimension $3$. In particular, the
authors are aware of a different, and in a sense more effective, proof of $\dhl(2)$. This proof, however, cannot be generalized to higher
dimensions.}. It is worth pointing out that in this step we use again $\dhl(d)$ as pigeonhole principle, but in a slightly different form
(Proposition \ref{p35} in the main text).

With Theorem \ref{t33} at hand, one can perform a recursive construction in order to find a vector strong subtree $(Z_1,....,Z_d,V)$ of
$(T_1,...,T_d,W)$ whose level product is a subset of $D$. This recursive selection, however, is rather unusual since we actually construct
an infinite chain of $(T_1,...,T_d)$ and a strong subtree $V$ of $W$ with special properties. The desired vector strong subtree is then obtained
using an ``unfolding" argument.

\subsection{Organization of the paper}

The paper is organized as follows. In \S 2 we set up our notation and terminology. The next section is devoted to the study of a natural class of
finite vector trees, which we call \textit{vector fans}. In \S 4 we introduce the notion of a dense level selection we mentioned above; the main
result in this section is Theorem \ref{t33}. The proof of Theorem \ref{t12} is completed in \S 5. Finally, in \S 6 we make some comments. 

\subsection{Acknowledgments}

We would like to thank S. Todorcevic for his comments and remarks as well as for several historical information concerning the problem solved
in the paper. The first named author was supported by NSF grant DMS-0903558.


\section{Background material}

By $\nn=\{0,1,2,...\}$ we denote the natural numbers. The cardinality of a set $X$ will be denoted by $|X|$.

\subsection{Trees and subtrees}

By the term \textit{tree} we mean a partially ordered set $(T,<)$ such that the set $\{s\in T: s<t\}$ is finite and
linearly ordered under $<$ for every $t\in T$. The cardinality of this set is defined to be the \textit{length of $t$
in $T$} and is denoted by $\ell_T(t)$. For every $n\in\nn$ the \textit{$n$-level of $T$}, denoted by $T(n)$, is
defined to be the set $\{t\in T: \ell_T(t)=n\}$. The \textit{height} of $T$, denoted by $h(T)$, is defined as follows.
If there exists $k\in\nn$ with $T(k)=\varnothing$, then we set $h(T)=\max\{n\in\nn: T(n)\neq\varnothing\}+1$;
otherwise, we set $h(T)=\infty$.

For every $t\in T$ by $\suc_T(t)$ we denote the set of \textit{successors of $t$ in $T$}, i.e.
\begin{equation} \label{e2new}
\suc_T(t)=\{s\in T: t\mik s\}.
\end{equation}
The \textit{set of immediate successors of $t$ in $T$} is the subset of $\suc_T(t)$ defined by
$\immsuc_T(t)=\{s\in T: t\mik s \text{ and } \ell_T(s)=\ell_T(t)+1\}$. More generally, for every
subset $F$ of $T$ we set $\suc_T(F)=\{s\in T: \text{exists } t\in F \text{ with } t\mik s\}$.

Let $n\in\nn$ with $n< h(T)$ and $F\subseteq T(n)$. The \textit{density} of $F$ is defined by
\begin{equation} \label{e21}
\dens(F)=\frac{|F|}{|T(n)|}.
\end{equation}
More generally, for every $m\in\nn$ with $m\mik n$ and every $t\in T(m)$ the \textit{density of $F$ relative to the node $t$}
is defined by
\begin{equation} \label{e22}
\dens(F \ | \ t)=\frac{|F\cap \suc_T(t)|}{|T(n)\cap \suc_T(t)|}.
\end{equation}

A \textit{subtree} $S$ of a tree $(T,<)$ is a subset of $T$ viewed as a tree equipped with the induced partial
ordering. For every $n\in\nn$ with $n<h(T)$ we set
\begin{equation} \label{e23}
T\upharpoonright n= T(0)\cup ... \cup T(n).
\end{equation}
Notice that $h(T\upharpoonright n)=n+1$. An \textit{initial subtree} of $T$ is a subtree of $T$ of the form
$T\upharpoonright n$ for some $n\in\nn$.

Finally, we recall that a tree $T$ is said to be \textit{pruned} (respectively, \textit{finitely branching}) if for
every $t\in T$ the set of immediate successors of $t$ in $T$ is nonempty (respectively, finite). It is said to be
\textit{uniquely rooted} if $|T(0)|=1$. The \textit{root} of a uniquely rooted tree $T$ is defined to be the node $T(0)$.

\subsection{Vector trees and level products}

A \textit{vector tree} $\bfct$ is a nonempty finite sequence of trees having common height; this common height
is defined to be the \textit{height} of $\bfct$ and will be denoted by $h(\bfct)$. We notice that, throughout the paper,
we will start the enumeration of vector trees with $1$ instead of $0$.

For every vector tree $\bfct=(T_1, ...,T_d)$ and every $n\in\nn$ with $n< h(\bfct)$ we set
\begin{equation} \label{e24}
\bfct\upharpoonright n=(T_1\upharpoonright n, ..., T_d\upharpoonright n).
\end{equation}
A vector tree of this form is called a \textit{vector initial subtree} of $\bfct$. Also let
\begin{equation} \label{e25}
\bfct(n)=\big(T_1(n),...,T_d(n)\big)
\end{equation}
and
\begin{equation} \label{e26}
\otimes\bfct(n)= T_1(n)\times ...\times T_d(n).
\end{equation}
The \textit{level product of $\bfct$}, denoted by $\otimes\bfct$, is defined to be the set
\begin{equation} \label{e27}
\bigcup_{n< h(\bfct)} \otimes\bfct(n).
\end{equation}
If $\bft=(t_1,...,t_d)\in\otimes\bfct$, then we define $\ell_{\bfct}(\bft)$ to be the unique $n\in\nn$ such that $\bft\in\otimes\bfct(n)$.
Also we set
\begin{equation} \label{e2newstaff}
\suc_{\bfct}(\bft)=\big( \suc_{T_1}(t_1), ..., \suc_{T_d}(t_d)\big).
\end{equation}
Finally, we say that a vector tree $\bfct=(T_1,...,T_d)$ is \textit{pruned} (respectively, \textit{finitely branching},
\textit{uniquely rooted}) if $T_i$ is pruned (respectively, finitely branching, uniquely rooted) for every $i\in \{1,...,d\}$.
Notice that if $\bfct$ is uniquely rooted, then $\bfct(0)=\otimes\bfct(0)$; the element $\bfct(0)$ will be called the
\textit{root} of $\bfct$.

\subsection{Strong subtrees and vector strong subtrees}

Let $T$ be a pruned, finitely branching and uniquely rooted tree. A subtree $S$ of $T$ is said to be \textit{strong} provided that:
(a) $S$ is uniquely rooted, (b) for every $n\in\nn$ there exists $m\in\nn$ with $S(n)\subseteq T(m)$, and (c) for every
$s\in S$ and every $t\in\immsuc_T(s)$ there exists a unique node $s'\in\immsuc_S(s)$ such that $t\mik s'$.
The \textit{level set} of a strong subtree $S$ of $T$ is defined to be the set
\begin{equation} \label{e28}
L_T(S)=\{ m\in\nn: \text{exists } n\in\nn \text{ with } S(n)\subseteq T(m)\}.
\end{equation}
A \textit{finite strong subtree} of $T$ is an initial subtree of a strong subtree of $T$.

The above concepts are naturally extended to vector trees. Specifically, let $\bfct=(T_1,...,T_d)$ be a pruned, finitely branching
and uniquely rooted vector tree. A \textit{vector strong subtree of} $\bfct$ is a vector tree $\bfcs=(S_1,...,S_d)$ such that
$S_i$ is a strong subtree of $T_i$ for every $i\in\{1,...,d\}$ and $L_{T_1}(S_1)= ...= L_{T_d}(S_d)$. A \textit{finite vector
strong subtree of} $\bfct$ is a vector initial subtree of a vector strong subtree of $\bfct$.

\subsection{Homogeneous trees and vector homogeneous trees}

Let $b\in\nn$ with $b\meg 2$. By $b^{<\nn}$ we shall denote the set of all finite sequences having values in $\{0,...,b-1\}$.
The empty sequence is denoted by $\varnothing$ and is included in $b^{<\nn}$. We view $b^{<\nn}$ as a tree equipped with the
(strict) partial order $\sqsubset$ of end-extension. For every $n\in\nn$ by $b^n$ we denote the $n$-level of $b^{<\nn}$.
If $n\meg 1$, then $b^{<n}$ stands for the initial subtree of $b^{<\nn}$ of height $n$. By $\lex$ we denote the usual
lexicographical order on $b^n$. For every $t,s\in b^{<\nn}$ by $t^{\con}s$ we denote their concatenation.

As we have already mentioned in the introduction, a homogeneous tree $T$ is a uniquely rooted tree such that every node in $T$ has
exactly $b$ immediate successors, where $b\meg 2$ is the branching number of $T$. In several cases, we need to enumerate the set
of nodes of a level of $T$. There is, of course, no problem for selecting an enumeration. But an arbitrary enumeration might
lack compatibility when passing to subtrees. This problem can be resolved by restricting our attention to the class of strong
subtrees of a \textit{fixed} homogeneous tree. It is, of course, clear that all homogeneous trees with the same branching
number are pairwise isomorphic, and so, such a restriction will have no effect in the generality of our results.
\medskip

\noindent \textbf{Convention.} \textit{In the rest of the paper by the term ``homogeneous tree" we will mean a strong subtree
of $b^{<\nn}$ for some $b\in\nn$ with $b\meg 2$. For every homogeneous tree $T$ by $b_T$ we shall denote the branching number
of $T$ and we set $B_T=b_T^{<\nn}$. We follow the same conventions for vector trees. Precisely, by the term ``vector homogeneous
tree" we will mean a vector strong subtree of $(b_1^{<\nn},...,b_d^{<\nn})$ for some $b_1,...,b_d\in\nn$ with $b_i\meg 2$ for
every $i\in\{1,...,d\}$. For every vector homogeneous tree $\bfct=(T_1,...,T_d)$ we set $b_{\bfct}=(b_{T_1},...,b_{T_d})$ and
$\bfcb_{\bfct}=(b_{T_1}^{<\nn},...,b_{T_d}^{<\nn})$.}
\medskip

The above convention enables us to effectively enumerate the set of immediate successors of a given node of a homogeneous tree $T$. 
Specifically, for every $t\in T$ and every $p\in\{0,...,b_T-1\}$ we set
\begin{equation} \label{e241}
t^{\con_T}\!p= \immsuc_T(t)\cap \suc_{B_T}(t^{\con}p).
\end{equation}
Notice that
\[ \immsuc_T(t)=\big\{ t^{\con_T}\!p: p\in\{0,...,b_T-1\}\big\}. \]
Also observe that for every $p,q\in\{0,...,b_T-1\}$ we have $t^{\con_T}\!p \lex t^{\con_T}\!q$ if and only if $p<q$.

\subsection{Canonical embeddings and vector canonical embeddings}

Let $T$ and $S$ be two homogeneous trees with the same branching number. We will say that a map $f:T\to S$ is a \textit{canonical embedding}
if the following conditions are satisfied.
\begin{enumerate}
\item[(a)] For every $t,t'\in T$ we have $\ell_T(t)=\ell_T(t')$ if and only if $\ell_S\big(f(t)\big)=\ell_S\big(f(t')\big)$.
\item[(b)] For every $t\in T$ and $p\in\{0,...,b_T-1\}$ we have $f(t^{\con_T}\!p)\in\suc_S\big(f(t)^{\con_S}\!p\big)$.
\end{enumerate}
It is easy to verify that for every canonical embedding $f:T\to S$ the following hold: (i) for every $t,t'\in T$ we have $t\sqsubset t'$
if and only if $f(t)\sqsubset f(t')$, (ii) $f$ is an injection, and (iii) the image $f(T)$ of $T$ under $f$ is a strong subtree of $S$.

Also notice that there exists a \textit{unique} bijection between $T$ and $S$ satisfying the above conditions. This unique bijection will be
called the \textit{canonical isomorphism} between $T$ and $S$ and will be denoted by $\ci(T,S)$.

We proceed to define the notion of a ``vector canonical embedding". It is a kind of ``tensorization" of a finite sequence of canonical
embeddings with special properties. Specifically, let $\bfct=(T_1,...,T_d)$ and $\bfcs=(S_1,...,S_d)$ be two vector homogeneous trees with
$b_\bfct=b_\bfcs$. For every $i\in\{1,...,d\}$ let $f_i:T_i\to S_i$ be a canonical embedding and assume that for every $n\in\nn$ and
every $\bft=(t_1,...,t_d)\in\otimes\bfct(n)$ we have $\ell_{S_1}\big(f_1(t_1)\big)= ... = \ell_{S_d}\big(f_d(t_d)\big)$.
This assumption permits us to define a map $(\otimes_{i=1}^d f_i):\otimes\bfct\to\otimes\bfcs$ by the rule
\begin{equation} \label{e251}
(\otimes_{i=1}^d f_i)\big((t_1,...,t_d)\big)=\big(f_1(t_1), ..., f_d(t_d)\big).
\end{equation}
A map of this form will be called a \textit{vector canonical embedding} of $\otimes\bfct$ into $\otimes\bfcs$.
The \textit{vector canonical isomorphism} between $\otimes\bfct$ and $\otimes\bfcs$ is defined to be the map
$(\otimes_{i=1}^d \ci(T_i,S_i))$ and will be denoted by $\bfci(\bfct,\bfcs)$.


\section{Fans and vector fans}

We start with the following.
\begin{defn} \label{dns1}
Let $T$ be a homogeneous tree. We say that a tree $F$ is a \emph{fan of $T$} if $F$ is of the form $R\upharpoonright 1$
for some strong subtree $R$ of $T$. The set of all fans of $T$ will be denoted by $\fan(T)$.
\end{defn}
Next we introduce the higher-dimensional analogues of fans.
\begin{defn} \label{dns2}
Let $\bfct$ be a vector homogeneous tree. We say that vector tree $\bfcf$ is a \emph{vector fan of $\bfct$} if $\bfcf$
is of the form $\bfcr\upharpoonright 1$ for some vector strong subtree $\bfcr$ of $\bfct$. The set of all vector fans of
$\bfct$ will be denoted by $\fan(\bfct)$.
\end{defn}
We view vector fans as the fundamental building blocks of vector homogeneous trees. This point of view is crucial for the
proof of Theorem \ref{t12}. Also we make two simple observations. Firstly, we notice that if $\bfcr$ is a vector strong subtree
of a vector homogeneous tree $\bfct$, then $\fan(\bfcr)\subseteq\fan(\bfct)$. Secondly, we observe that if $\bfcf$ is a vector
fan of $\bfct$, then $\bfcf(0)\in\otimes\bfct$ and $\otimes\bfcf(1)\subseteq \otimes\bfct$.

We will need two combinatorial results concerning vector fans. The first one is the following (see \cite[Theorem 1.3]{Mi1}).
\begin{prop} \label{pns3}
Let $\bfct$ be a vector homogeneous tree and set $\bft_0=\bfct(0)$. Then for every finite coloring
\[ \fan(\bfct)=\mathcal{C}_0\cup ... \cup \mathcal{C}_r \]
there exist $m\in\{0,...,r\}$ and a vector strong subtree $\bfcz$ of $\bfct$ with $\bfcz(0)=\bft_0$
such that $\bfcf\in\mathcal{C}_m$ for every $\bfcf\in\fan(\bfcz)$ with $\bfcf(0)=\bft_0$.
\end{prop}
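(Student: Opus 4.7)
The plan is to reduce Proposition \ref{pns3} to a direct application of $\hl(N)$ (Theorem \ref{t11}) in dimension $N := \sum_{i=1}^d b_{T_i}$. The intuition is that a vector fan of $\bfct$ rooted at $\bft_0 = (t_{1,0},\ldots,t_{d,0})$ is nothing more than a choice of one node at a common level in each of the $N$ branches of $\bfct$ hanging off the immediate successors of $\bft_0$. Concretely, for each $i \in \{1,\ldots,d\}$ and each $p \in \{0,\ldots,b_{T_i}-1\}$ set
\[ T_i^{(p)} := \suc_{T_i}(t_{i,0}^{\con_{T_i}}\!p), \]
a uniquely rooted, finitely branching tree without maximal nodes, and collect these $N$ trees into a single vector tree $\bfct'$ (the enumeration is immaterial).

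The key observation is that vector fans $\bfcf$ of $\bfct$ with $\bfcf(0) = \bft_0$ are in natural bijection with the level product $\otimes\bfct'$: a fan $\bfcf$ whose top level lies in $\otimes\bfct(n)$ corresponds, after sorting the nodes of $\otimes\bfcf(1)$ according to which branch they inhabit, to a unique element of $\otimes\bfct'(n-1)$; the strong-subtree condition guarantees that each branch receives exactly one node, making the correspondence invertible. Under this bijection the coloring $\fan(\bfct) = \mathcal{C}_0 \cup \cdots \cup \mathcal{C}_r$ induces a finite coloring of $\otimes\bfct'$ (fans with root different from $\bft_0$ may simply be discarded).

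I would then invoke Theorem \ref{t11} to obtain a vector strong subtree $\bfcs' = (S_i^{(p)})_{i,p}$ of $\bfct'$ whose level product is monochromatic, of color $m$ say, and reassemble it into a vector strong subtree $\bfcz = (Z_1,\ldots,Z_d)$ of $\bfct$ by setting
\[ Z_i := \{t_{i,0}\} \cup \bigcup_{p=0}^{b_{T_i}-1} S_i^{(p)}. \]
The common-level-set property for $\bfcs'$ in $\bfct'$ transfers directly to a common-level-set property for $\bfcz$ in $\bfct$, and $\bfcz(0) = \bft_0$ by design. By construction, every vector fan of $\bfcz$ with root $\bft_0$ corresponds under the bijection above to an element of the monochromatic level product of $\bfcs'$, and so belongs to $\mathcal{C}_m$.

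The only delicate point in this plan is verifying that each reassembled $Z_i$ is genuinely a \emph{strong} subtree of $T_i$: at the root $t_{i,0}$ the unique-immediate-successor condition holds because the roots of the $S_i^{(p)}$ are precisely the elements of $\immsuc_{T_i}(t_{i,0})$, and at any non-root node it is inherited from the corresponding condition on $S_i^{(p)}$ inside $T_i^{(p)}$ (using pairwise disjointness of the branches $T_i^{(p)}$ for distinct $p$). Once this bookkeeping is done, Theorem \ref{t11} supplies all the genuine combinatorial content.
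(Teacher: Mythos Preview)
Your approach is exactly what the paper has in mind: it states, without further detail, that Proposition \ref{pns3} follows from $\hl\big(\sum_{i=1}^d b_{T_i}\big)$, and your reduction via the branches $T_i^{(p)}=\suc_{T_i}(t_{i,0}^{\con_{T_i}}p)$ is the standard way to cash this out. One small slip in the bookkeeping you flag as delicate: the roots of the $S_i^{(p)}$ need not literally be the elements of $\immsuc_{T_i}(t_{i,0})$ (Theorem \ref{t11} does not force the strong subtrees to share roots with the ambient trees), but each root of $S_i^{(p)}$ is a $T_i$-successor of the distinct immediate successor $t_{i,0}^{\con_{T_i}}p$ and all of them lie at a common $T_i$-level by the common-level-set property of $\bfcs'$, which is precisely what condition (c) of the strong-subtree definition requires at $t_{i,0}$; with this correction your verification that $Z_i$ is a strong subtree goes through.
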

To state the second result we need to introduce some notation. For every vector
homogeneous tree $\bfct$ and every $n\in\nn$ with $n\meg 1$ we set
\begin{equation} \label{ens1}
\fan(\bfct,n)=\big\{ \bfcf\in \fan(\bfct): \otimes\bfcf(1)\subseteq\otimes\bfct(n)\big\}.
\end{equation}
\begin{prop} \label{pns4}
Let $\bfct=(T_1,...,T_d)$ be a vector homogeneous tree. For every $n\in\nn$ with $n\meg 1$ let $\mathcal{F}_n$ be a subset
of $\fan(\bfct)$ with the following property.
\begin{enumerate}
\item[(P)] For every vector strong subtree $\bfcs$ of $\bfct$ there exists $\bfcf\in\fan(\bfcs)\cap \mathcal{F}_n$
with $\bfcf(0)=\bfcs(0)$.
\end{enumerate}
Also let $\bfcr$ be a vector strong subtree $\bfct$ and set $\bfr_0=\bfcr(0)$. Then there exists a vector strong subtree $\bfcz$
of $\bfcr$ with $\bfcz(0)=\bfr_0$ such that for every $n\in\nn$ with $n\meg 1$ and every $\bfcf\in\fan(\bfcz,n)$ with
$\bfcf(0)=\bfr_0$ we have $\bfcf\in\mathcal{F}_n$.
\end{prop}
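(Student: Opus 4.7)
The plan is to construct $\bfcz$ by an inductive fusion argument, using Proposition~\ref{pns3} as the pigeon-hole principle at each stage and invoking hypothesis~(P) to identify the useful monochromatic colour.

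First I record the single-stage observation. Given any vector strong subtree $\bfcs$ of $\bfcr$ with $\bfcs(0)=\bfr_0$ and any $n\meg 1$, I would partition $\fan(\bfcs)$ into $\mathcal{F}_n\cap\fan(\bfcs)$ and its complement and apply Proposition~\ref{pns3} to obtain a vector strong subtree $\bfcs^{\ast}\subseteq\bfcs$ with $\bfcs^{\ast}(0)=\bfr_0$ on which every fan rooted at $\bfr_0$ has the same colour. Hypothesis~(P) applied to $\bfcs^{\ast}$ forces that colour to be $\mathcal{F}_n$. So inside any strong subtree of $\bfcr$ rooted at $\bfr_0$ one can find a refinement whose fans rooted at $\bfr_0$ all lie in $\mathcal{F}_n$.

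Next, the plan is to build recursively a decreasing chain $\bfcr=\bfcv_0\supseteq\bfcv_1\supseteq\bfcv_2\supseteq\dots$ of vector strong subtrees of $\bfcr$ with common root $\bfr_0$, satisfying
\begin{enumerate}
\item[(a)] $\bfcv_n(k)=\bfcv_{n-1}(k)$ for every $0\mik k\mik n-1$, and
\item[(b)] every $\bfcf\in\fan(\bfcv_n)$ with $\bfcf(0)=\bfr_0$ and $\otimes\bfcf(1)\subseteq\otimes\bfcv_n(k)$ for some $k\meg n$ lies in $\mathcal{F}_n$.
\end{enumerate}
If the chain can be built, then (a) guarantees that $\bfcz(k):=\bfcv_n(k)$ is well defined, independent of the choice of $n\meg k$, and the resulting $\bfcz$ is a vector strong subtree of $\bfcr$ that is also a vector strong subtree of every $\bfcv_n$. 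Consequently every $\bfcf\in\fan(\bfcz,n)$ with $\bfcf(0)=\bfr_0$ satisfies $\otimes\bfcf(1)\subseteq\otimes\bfcz(n)=\otimes\bfcv_n(n)$, so by~(b) we get $\bfcf\in\mathcal{F}_n$, which is the conclusion.

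The main obstacle is the inductive step, where one needs $\bfcv_n\subseteq\bfcv_{n-1}$ to satisfy~(b) while \emph{preserving the first $n$ levels of $\bfcv_{n-1}$}. Proposition~\ref{pns3} alone does not preserve any prescribed initial subtree, and this has to be arranged by invoking Theorem~\ref{t11} in the form $\hl\big(\sum_{i=1}^{d}b_{T_i}\big)$, applied to the tuple of $\sum_{i=1}^{d} b_{T_i}$ branch subtrees above $\bfr_0$ in $\bfcv_{n-1}$---one tree per pair $(i,e)$ with $i\in\{1,\dots,d\}$ and $e\in\immsuc_{T_i}(r_{0,i})$, consisting of the descendants of $e$ in the $i$-th component of $\bfcv_{n-1}$. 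Vector fans of $\bfcv_{n-1}$ rooted at $\bfr_0$ correspond exactly to level-product elements of this tuple; Theorem~\ref{t11} yields a common refinement monochromatic on the level product; hypothesis~(P) again forces the monochromatic colour to be $\mathcal{F}_n$; and the preserved intermediate levels of $\bfcv_{n-1}$ are reinstated by restricting the refinement, in each branch subtree, to descendants passing through the prescribed node of $\bfcv_{n-1}(k)$ for $1\mik k\mik n-1$. This produces the $\bfcv_n$ witnessing~(a) and~(b) and completes the induction.
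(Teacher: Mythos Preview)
Your fusion scheme and the use of property (P) to identify the monochromatic colour are correct and match the paper. The gap is in the last move of your inductive step: after applying $\hl\big(\sum_i b_{T_i}\big)$ to the $\sum_i b_{T_i}$ branch subtrees immediately above $\bfr_0$, the levels $1,\dots,n-1$ of $\bfcv_{n-1}$ cannot in general be ``reinstated''. The phrase ``the prescribed node of $\bfcv_{n-1}(k)$'' in each branch subtree hides the issue: for $k\meg 2$ each branch contains $b_{T_i}^{\,k-1}$ nodes of $\bfcv_{n-1}(k)$, not one. Concretely, the refinement you obtain is a vector strong subtree $\bfcs^\ast$ of $\bfcv_{n-1}$ with root $\bfr_0$ and some level set $\{0<m_1<m_2<\cdots\}$ in $\bfcv_{n-1}$ over which Theorem~\ref{t11} gives you no control. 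If $m_1\meg 2$ then $\bfcs^\ast(1)$ has only $b_{T_i}$ nodes in coordinate $i$, and every higher level of $\bfcs^\ast$ lies above these; hence at $\bfcv_{n-1}$-level $2$ only $b_{T_i}$ of the $b_{T_i}^{\,2}$ nodes have any descendant in $\bfcs^\ast$. If $n\meg 3$ you need $\bfcv_n(2)=\bfcv_{n-1}(2)$ and then one successor of each node of $\bfcv_{n-1}(2)$ in every direction at the next $\bfcv_n$-level, but most of those directions contain no $\bfcs^\ast$-node. So no strong subtree $\bfcv_n$ satisfying (a) can have its levels $k\meg n$ built from $\bfcs^\ast$.

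The paper's fix is to perform the Halpern--L\"auchli step above level $n{+}1$ of $\bfcr_n$ rather than above the root. One lists the $M_i=b_{T_i}^{\,n+1}$ nodes of $R^n_i(n{+}1)$ and forms the vector tree $\bfcv$ of their $\sum_i M_i$ successor subtrees; a fan of $\bfcr_n$ rooted at $\bfr_0$ with top in level $n{+}1$ is then encoded by a \emph{strong position} $\mathcal{P}$, namely a choice of $b_{T_i}$ of these $M_i$ coordinates for each $i$. For each fixed $\mathcal{P}$ the two-colouring of $\otimes\bfcv$ by membership in $\mathcal{F}_{n+1}$ depends only on the $\sum_i b_{T_i}$ selected coordinates, so $\hl\big(\sum_i b_{T_i}\big)$ still suffices; iterating over the finitely many strong positions and invoking (P) rules out the wrong colour each time. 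Because this refinement touches only levels $\meg n{+}1$, condition (a) is automatic, and one only needs the weaker requirement that fans at level exactly $n{+}1$ land in $\mathcal{F}_{n+1}$, rather than your stronger (b).
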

Proposition \ref{pns4} can be hardly characterized as new since it follows using fairly standard arguments (see, e.g.,
\cite{C,Mi1,Mi2}). Nevertheless, we have decided to include a proof for two reasons. The first one is for self-containedness. 
Secondly, because we want to emphasize which instance of Theorem \ref{t11} is needed for the proof.
\begin{proof}[Proof of Proposition \ref{pns4}]
We write $\bfcr=(R_1,...,R_d)$ and $\bfr_0=(r_1,...,r_d)$. Recursively, we shall construct a sequence $(\bfcr_n)$
of vector strong subtrees of $\bfcr$ such that for every $n\in\nn$ the following are satisfied.
\begin{enumerate}
\item[(a)] $\bfcr_n(0)=\bfr_0$.
\item[(b)] $\bfcr_{n+1}$ is a vector strong subtree of $\bfcr_n$ and $\bfcr_{n+1}\upharpoonright n=\bfcr_n\upharpoonright n$.
\item[(c)] If $n\meg 1$, then for every $\bfcf\in\fan(\bfcr_n,n)$ with $\bfcf(0)=\bfr_0$ we have $\bfcf\in\mathcal{F}_n$.
\end{enumerate}
Assuming that the construction has been carried out, we define $\bfcz$ to be the unique vector strong subtree of $\bfcr$
satisfying $\bfcz(n)=\bfcr_n(n)$ for every $n\in\nn$. It is easily seen that $\bfcz$ is the desired vector tree.

We proceed to the construction. For $n=0$ we set $\bfcr_0=\bfcr$ and we notice that with this choice property (a)
is satisfied (the other properties are meaningless for $n=0$). Assume that for some $n\in\nn$ we have constructed the vector 
trees $\bfcr_0,...,\bfcr_n$ so that (a), (b) and (c) are satisfied and write $\bfcr_n=(R^n_1,...,R^n_d)$. For the construction
the vector tree $\bfcr_{n+1}$ we need to introduce some notation and terminology.
\medskip

\noindent \textbf{(A)} Let $i\in\{1,...,d\}$ be arbitrary. We set $M_i=b_{T_i}^{n+1}$ and we notice that the cardinality of the
$(n+1)$-level $R^n_i(n+1)$ of $R^n_i$ is $M_i$. We write the set $R^n_i(n+1)$ in lexicographical increasing order as
$\{t^i_1 \lex ...\lex t^i_{M_i}\}$ and we set $V^i_j=\suc_{R^n_i}(t^i_j)$ for every $j\in\{1,...,M_i\}$.
\medskip

\noindent \textbf{(B)} We define $\bfcv$ to be the vector tree $(V^i_j)_{i=1, j=1}^{d \ \ \ M_i}$. For every vector strong subtree
$\bfcu$ of $\bfcv$ we can naturally associate a vector strong subtree $\bfcr^{\bfcu}=(R^{\bfcu}_1,...,R^{\bfcu}_d)$ of $\bfcr_n$.
Precisely, write $\bfcu$ as $(U^i_j)_{i=1, j=1}^{d \ \ \ M_i}$ and for every $i\in\{1,...,d\}$ set
\[ R^{\bfcu}_i=(R^n_i\upharpoonright n)\cup U^i_1\cup ... \cup U^i_{M_i}.\]
Observe that $\bfcr^{\bfcu}\upharpoonright n=\bfcr_n\upharpoonright n$. Also notice that $\bfcr^\bfcv=\bfcr_n$.
\medskip

\noindent \textbf{(C)} Next we introduce the notion of a \textit{strong position}. It is a technical tool for the construction
of the vector tree $\bfcr_{n+1}$. Specifically, a strong position $\mathcal{P}$ is defined to be a finite sequence $(P_1,...,P_d)$ such that
\begin{enumerate}
\item[(I)] $P_i\subseteq \{1,...,M_i\}$ for every $i\in\{1,...,d\}$, and
\item[(II)] if $F_i=\{r_i\}\cup\{ t^i_j:j\in P_i\}$ for every $i=\{1,...,d\}$, then $\bfcf=(F_1,...,F_d)$ is a vector fan of $\bfcr_n$.
\end{enumerate}
By (II), if $\mathcal{P}=(P_1,...,P_d)$ is a strong position, then $|P_i|=b_{T_i}$ for every $i\in\{1,...,d\}$.
For every $\bfv=(v^i_j)_{i=1, j=1}^{d \ \ \ M_i}\in\otimes\bfcv$ and every strong position $\mathcal{P}=(P_1,...,P_d)$ we set
\[ \bfcf_{\bfv,\mathcal{P}}=\big( \{r_1\}\cup \{v^1_j:j\in P_1\}, ..., \{r_d\}\cup \{v^d_j:j\in P_d\}\big).\]
By (II), it is clear that $\bfcf_{\bfv,\mathcal{P}}\in\fan(\bfcr^{\bfcv})$ and $\bfcf_{\bfv,\mathcal{P}}(0)=\bfr_0$.
We isolate the following fact: \textit{if $\bfcu$ is a vector strong subtree of $\bfcv$, $k\in\nn$ with $k\meg n+1$
and $\bfcf\in\fan(\bfcr^{\bfcu},k)$ with $\bfcf(0)=\bfr_0$, then there exist a unique strong position $\mathcal{P}$
and an element $\bfu$ of $\otimes\bfcu$ (not necessarily unique) such that $\bfcf=\bfcf_{\bfu,\mathcal{P}}$}. The existence
of $\mathcal{P}$ and $\bfu$ is a rather direct consequence of the relevant definitions.
\medskip

After this preliminary discussion we are ready to proceed to the construction of the vector tree $\bfcr_{n+1}$. For every strong
position $\mathcal{P}$ let
\[ \mathcal{G}_{\mathcal{P}}=\big\{ \bfv\in\otimes\bfcv: \bfcf_{\bfv,\mathcal{P}}\in\mathcal{F}_{n+1}\big\}. \]
Applying successively Theorem \ref{t11}, we find a vector strong subtree $\bfcu_0$ of $\bfcv$ such that for every strong
position $\mathcal{P}$ we have that either $\otimes\bfcu_0\subseteq\mathcal{G}_{\mathcal{P}}$ or 
$\otimes\bfcu_0\cap \mathcal{G}_{\mathcal{P}}=\varnothing$. Notice that the set $\mathcal{G}_{\mathcal{P}}$ depends only on the
coordinates determined by $\mathcal{P}$, and so, each time we need to apply $\hl\big(\sum_{i=1}^d b_{T_i}\big)$.

We set $\bfcr_{n+1}=\bfcr^{\bfcu_0}$. The vector tree $\bfcr_{n+1}$ is the desired one. It is clear that we only
need to check that property (c) is satisfied. So, let $\bfcf\in\fan(\bfcr_{n+1},n+1)$ with $\bfcf(0)=\bfr_0$ be arbitrary.
As we have already mentioned in \textbf{(C)} above, there exist a unique strong position $\mathcal{Q}=(Q_1,...,Q_d)$
and an element $\bfu$ of $\otimes\bfcu_0$ (not necessarily unique) such that $\bfcf=\bfcf_{\bfu,\mathcal{Q}}$.
In order to show that $\bfcf\in\mathcal{F}_{n+1}$ it is enough to prove that $\otimes\bfcu_0\subseteq\mathcal{G}_{\mathcal{Q}}$.
To this end, we will argue by contradiction. So, assume that $\otimes\bfcu_0\cap\mathcal{G}_{\mathcal{Q}}=\varnothing$.
We write $\bfcu_0$ as $(U^i_j)_{i=1, j=1}^{d \ \ \ M_i}$ and for every $i\in\{1,...,d\}$ we set $S_i=\{r_i\} \cup \{U^i_j:j\in Q_i\}$.
Let $\bfcs=(S_1,...,S_d)$ and notice that $\bfcs$ is a vector strong subtree of $\bfct$ with $\bfcs(0)=\bfr_0$.
Let $\bfcf'\in\fan(\bfcs)$ with $\bfcf'(0)=\bfr_0$. Observe that there exists $k\in\nn$ with $k\meg n+1$ such that
$\bfcf'\in\fan(\bfcr^{\bfcu_0},k)$. Hence, there exists an element $\bfu'\in\otimes\bfcu_0$ (not necessarily unique)
such that $\bfcf'=\bfcf_{\bfu',\mathcal{Q}}$. Since $\bfu'\in\otimes\bfcu_0$ we see that $\bfu'\notin\mathcal{G}_{\mathcal{Q}}$
and so $\bfcf'\notin\mathcal{F}_{n+1}$. In other words, for every $\bfcf'\in\fan(\bfcs)$ with $\bfcf'(0)=\bfcs(0)$ we have that
$\bfcf'\notin\mathcal{F}_{n+1}$. This contradicts property (P). Therefore, $\otimes\bfcu_0\subseteq\mathcal{G}_{\mathcal{Q}}$
and so the vector tree $\bfcr_{n+1}$ has the desired properties.

This completes the recursive construction, and as we have already indicated, the proof of Proposition \ref{pns4} is
also completed.
\end{proof}


\section{Dense level selections}

\subsection{Definitions and statement of the main result}

We start by introducing the following definition.
\begin{defn} \label{d31}
Let $\bfct=(T_1,...,T_d)$ be a vector homogeneous tree, $W$ a homogeneous tree and $0<\ee\mik 1$. We say that a map
$D:\otimes\bfct\to 2^W$ is an \emph{$\ee$-dense level selection} if there exists a strictly increasing sequence $(l_n)$
in $\nn$ such that for every $n\in\nn$ and every $\bft\in\otimes\bfct(n)$ we have $D(\bft)\subseteq W(l_n)$ and
$\dens\big(D(\bft)\big)\meg\ee$.
\end{defn}
The next definition is a crucial conceptual step towards the proof of Theorem \ref{t12}.
\begin{defn} \label{d32}
Let $\bfct=(T_1,...,T_d)$ be a vector homogeneous tree, $W$ a homogeneous tree, $0<\ee\mik 1$ and $D:\otimes\bfct\to 2^W$
an $\ee$-dense level selection. Also let $\bfcr$ be a vector strong subtree of $\bfct$, $w\in W$ and $0<\theta\mik 1$.
We say that the pair $(\bfcr,w)$ is \emph{strongly $\theta$-correlated with respect to} $D$ if, setting $\bfr_0=\bfcr(0)$,
the following conditions are satisfied.
\begin{enumerate}
\item[(C1)] We have $w\in D(\bfr_0)$.
\item[(C2)] For every $\bfcf\in\fan(\bfcr)$ with $\bfcf(0)=\bfr_0$ and every $p\in\{0,...,b_W-1\}$ we have
\begin{equation} \label{e311}
\dens\Big( \bigcap_{\bfr\in\otimes \bfcf(1)} D(\bfr) \ \big| \ w^{\con_W}\!p \, \Big)\meg \theta.
\end{equation}
\end{enumerate}
\end{defn}
We are now ready to state the main result in this section.
\begin{thm} \label{t33}
Let $d\meg 1$ and assume that $\mathrm{DHL}(d)$ holds. Also let $\bfct=(T_1,...,T_d)$ be a vector homogeneous tree,
$W$ a homogeneous tree, $0<\ee\mik 1$ and $D:\otimes\bfct\to 2^W$ an $\ee$-dense level selection. Then there exist
a vector strong subtree $\bfcr$ of $\bfct$, $w\in W$ and $0<\theta\mik 1$ such that the pair $(\bfcr,w)$ is strongly
$\theta$-correlated with respect to $D$.
\end{thm}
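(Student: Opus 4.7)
\emph{Strategy.} The plan is to first establish the correlation statement for each \emph{individual} fan at the root of $\bfct$, after passing to an appropriate vector strong subtree, and then upgrade it to a uniform statement over all fans of a single vector strong subtree by invoking Proposition~\ref{pns4}. The pair $(\bfcr,w)$ will satisfy $\bfcr(0)=\bft_0:=\bfct(0)$, so the node $w$ has to be chosen from $D(\bft_0)\subseteq W(l_0)$ so that condition~(C1) is automatic; the work is then devoted entirely to condition~(C2).

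\emph{Step 1: choice of $w$.} Since $\dens\bigl(D(\bft_0)\bigr)\geq\ee$, an averaging argument over the partition of each higher level $W(l_n)$ into the $b_W$ immediate-successor classes of each node of $W(l_0)$ allows me to pick $w\in D(\bft_0)$ such that, for every $p\in\{0,\ldots,b_W-1\}$, the ``localised'' map
\[ D_{w,p}(\bft)\;:=\;D(\bft)\cap\suc_W(w^{\con_W}\!p) \]
has density at least some $\ee'>0$ relative to $W(l_n)\cap\suc_W(w^{\con_W}\!p)$ on a positive proportion of nodes $\bft\in\otimes\bfct(n)$ at every level $n\geq 1$, with $\ee'$ depending only on $\ee$ and $b_W$. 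This $w$ is the candidate node for the strongly correlated pair.

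\emph{Step 2: single-fan correlation via $\dhl(d)$.} I then aim to produce a fixed $\theta>0$ and, after passing to a vector strong subtree of $\bfct$, establish for each vector fan $\bfcf$ with $\bfcf(0)=\bft_0$ and each $p\in\{0,\ldots,b_W-1\}$ the bound
\[
\dens\Bigl(\,\bigcap_{\bfr\in\otimes\bfcf(1)} D(\bfr) \ \Big| \ w^{\con_W}\!p\,\Bigr) \;\geq\;\theta.
\]
The strategy is to iterate the inductive hypothesis $\dhl(d)$, in a form tailored to dense level selections (the variant alluded to in the outline), so as to ``peel off'' the $\prod_{i=1}^d b_{T_i}$ elements of $\otimes\bfcf(1)$ one at a time. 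At each stage, the partial intersection is itself a dense level selection in the sense of Definition~\ref{d31} — with a density constant degraded by a factor depending only on $b_W$ and the $b_{T_i}$'s — and a further application of $\dhl(d)$ delivers a vector strong subtree on which the next fan-coordinate can be safely incorporated. After $\prod_{i=1}^d b_{T_i}$ such iterations I obtain a vector strong subtree $\bfcs$ of $\bfct$ with $\bfcs(0)=\bft_0$ on which every single fan at the root satisfies the required bound with some $\theta>0$ depending only on $\ee$, $b_W$ and the $b_{T_i}$'s.

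\emph{Step 3 and main obstacle.} Finally, for each $n\geq 1$ let $\mathcal{F}_n$ be the set of fans $\bfcf\in\fan(\bfct,n)$ with $\bfcf(0)=\bft_0$ satisfying the density bound of~(C2) for the fixed $w$ and $\theta$. Step~2, applied inside an arbitrary vector strong subtree $\bfcs$ of $\bfct$ with $\bfcs(0)=\bft_0$, exhibits at least one fan in $\fan(\bfcs)\cap\mathcal{F}_n$ at the root, verifying hypothesis~(P) of Proposition~\ref{pns4}; that proposition then produces a vector strong subtree $\bfcr$ of $\bfct$ all of whose fans at $\bft_0$ lie in the relevant $\mathcal{F}_n$, which is exactly condition~(C2) for $(\bfcr,w)$. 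The main technical obstacle is Step~2: the intersection runs over $\prod_i b_{T_i}$ (exponentially many in $d$) sets, and the iteration must be organised so that the peelings at different fan coordinates all take place inside a \emph{common} vector strong subtree, so that the survivor at every stage is genuinely a dense level selection on a single vector subtree of $\bfct$. Arranging this compatibility is precisely what forces the use of $\dhl(d)$ in its dense-level-selection formulation rather than as a bare partition-regular pigeonhole.
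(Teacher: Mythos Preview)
Your Step~2 is the heart of the argument and, as written, it does not work. The sets $D(\bfr)$ for $\bfr\in\otimes\bfcf(1)$ all live in a \emph{single} level $W(l_n)$ of $W$; their intersection is one fixed subset of that level, and nothing in $\dhl(d)$ bounds the density of such an intersection from below. When you write that ``the partial intersection is itself a dense level selection in the sense of Definition~\ref{d31}'', this is a category error: a dense level selection is a map $\otimes\bfcs\to 2^W$, whereas a partial intersection $\bigcap_{j\le k}D(\bfr_j)$ for fixed $\bfr_1,\dots,\bfr_k$ is a single subset of $W(l_n)$. There is no natural way to turn the ``peeling'' into a sequence of dense level selections on a common vector strong subtree of $\bfct$, and you never specify one. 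The consequence of $\dhl(d)$ tailored to dense level selections that the paper actually proves (Proposition~\ref{p35}) only gives \emph{one} fan whose intersection is \emph{nonempty} in a prescribed direction; it does not yield a positive lower bound $\theta$, and it certainly does not iterate to handle all $\prod_i b_{T_i}$ coordinates of a fan.

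Your Step~1 is also problematic. An averaging argument over $W(l_0)$ produces, for each level $n$ separately, some node $w_n\in W(l_0)$ with the desired localised density; but nothing forces $w_n$ to be independent of $n$, and requiring $w\in D(\bft_0)$ with the property holding simultaneously for \emph{every} $p$ and \emph{every} $n$ is not an averaging statement. The paper avoids this entirely by not fixing $w$ in advance.

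The paper's proof is by contradiction and is organised in the opposite direction. Assuming no strongly $\theta$-correlated pair exists for any $\theta$, one shows (Lemma~\ref{l31newstaff}) that every vector strong subtree contains a fan for which the intersection is \emph{small} in at least one direction, with the smallness calibrated by the level; Proposition~\ref{pns4} and Proposition~\ref{pns3} then stabilise these bad directions into a single map $\phi_{\bfs_0}$ for each root (Lemmas~\ref{l37} and~\ref{l38}), and a further pigeonhole fixes a single bad direction $p_0$. One then \emph{subtracts} the resulting small ``forbidden'' sets $G_k$ from the dense level selection to obtain a new dense level selection $B$ with the property that for \emph{every} fan the intersection misses the $p_0$-cone; this directly contradicts Proposition~\ref{p35}. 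The crucial quantitative point is Fact~\ref{f36}: $|\fan(\bfcs,n)|=\Theta_n$ is independent of the subtree $\bfcs$, so the sequence $\theta_n=\ee/(2b_W\Theta_n)$ can be fixed in advance, and the union bound over $\Theta_n$ fans in Lemma~\ref{l310} leaves positive density. This is where the exponential blow-up you allude to is actually absorbed, and there is no analogue of it in your direct scheme.
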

The proof of Theorem \ref{t33} will be given in \S 4.3. At this point, let us isolate the following consequence
of Theorem \ref{t33}. It will be of particular importance in \S 5.
\begin{cor} \label{c34}
Let $d\meg 1$ and assume that $\mathrm{DHL}(d)$ holds. Also let $\bfct=(T_1,...,T_d)$ be a vector homogeneous tree, $W$
a homogeneous tree, $0<\ee\mik 1$ and $D:\otimes\bfct\to 2^W$ an $\ee$-dense level selection. Then there exist a vector strong
subtree $\bfcs$ of $\bfct$ and for every $\bfs\in\otimes\bfcs$ a node $w_{\bfs}\in W$ and a constant $0<\theta_{\bfs}\mik 1$
with the following property. For every $\bfs\in\otimes\bfcs$ and every vector strong subtree $\bfcz$ of $\suc_{\bfcs}(\bfs)$
with $\bfcz(0)=\bfs$ the pair $(\bfcz,w_\bfs)$ is strongly $\theta_{\bfs}$-correlated with respect to $D$.
\end{cor}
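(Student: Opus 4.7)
The key observation is that strong $\theta$-correlation is hereditary under passing to vector strong subtrees with the same root: if $(\bfcr, w)$ is strongly $\theta$-correlated with respect to $D$ and $\bfcz$ is a vector strong subtree of $\bfcr$ with $\bfcz(0)=\bfcr(0)$, then $(\bfcz, w)$ is strongly $\theta$-correlated with respect to $D$. Indeed, condition (C1) depends only on the root, and (C2) is inherited via the inclusion $\fan(\bfcz) \subseteq \fan(\bfcr)$ noted right after Definition \ref{dns2}. Consequently, it suffices to construct $\bfcs$ together with, for every $\bfs \in \otimes\bfcs$, a triple $(\bfcr_\bfs, w_\bfs, \theta_\bfs)$ witnessing strong correlation (where $\bfcr_\bfs$ is a vector strong subtree of $\suc_\bfct(\bfs)$ with root $\bfs$) such that $\suc_\bfcs(\bfs)$ is itself a vector strong subtree of $\bfcr_\bfs$; the hereditary observation then gives the conclusion.

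My plan is a fusion recursion. At stage $n$ I will have a vector strong subtree $\bfcu^{(n)}$ of $\bfct$, levels $l_0 < l_1 < \dots < l_n$ in $\bfct$ with $\bfcs(k) := \bfcu^{(n)}(l_k)$ for $k \leq n$, and a triple $(\bfcr_\bfs, w_\bfs, \theta_\bfs)$ for every $\bfs \in \otimes\bfcs(k)$ with $k \leq n$, maintained under the invariant that $\suc_{\bfcu^{(n)}}(\bfs)$ is a vector strong subtree of $\bfcr_\bfs$. Since $\bfcu^{(n+1)}$ will be a vector strong subtree of $\bfcu^{(n)}$ agreeing through level $l_n$, the invariant propagates, so in the limit $\suc_\bfcs(\bfs) \subseteq \bfcr_\bfs$ for every $\bfs \in \otimes\bfcs$.

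The nontrivial step is the passage from stage $n$ to stage $n+1$, in which one must produce triples for all finitely many $\bfs \in \otimes\bfcs(n)$ and then assemble them into a single $\bfcu^{(n+1)}$. I enumerate $\otimes\bfcs(n) = \{\bfs_1, \dots, \bfs_K\}$ and process the nodes sequentially. For $\bfs_1$, apply Theorem \ref{t33} to the restriction of $D$ to $\suc_{\bfcu^{(n)}}(\bfs_1)$ (which is still an $\ee$-dense level selection on a subsequence of the levels of $W$) to obtain $(\bfcr_{\bfs_1}, w_{\bfs_1}, \theta_{\bfs_1})$ with level set $L_1$ in $\bfct$. Before treating $\bfs_2$, first replace $\suc_{\bfcu^{(n)}}(\bfs_2)$ by its vector strong subtree with level set $L_1$, and only then apply Theorem \ref{t33} to produce $(\bfcr_{\bfs_2}, w_{\bfs_2}, \theta_{\bfs_2})$ with level set $L_2 \subseteq L_1$. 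Iterating through $\bfs_K$ produces a decreasing chain $L_1 \supseteq L_2 \supseteq \dots \supseteq L_K$; finally, restrict every $\bfcr_{\bfs_i}$ to its vector strong subtree with level set $L_K$, which preserves strong correlation by the hereditary observation. Take $l_{n+1} := \min L_K$ and let $\bfcu^{(n+1)}$ coincide with $\bfcu^{(n)}$ below level $l_n$ and with the shrunken $\bfcr_{\bfs_i}$'s above each $\bfs_i$.

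The only genuine obstacle is level-set compatibility across separate invocations of Theorem \ref{t33}: a priori the level sets in $\bfct$ of $\bfcr_{\bfs_1}, \dots, \bfcr_{\bfs_K}$ need not intersect in an infinite set, so one cannot naively patch them together. The sequential scheme above sidesteps this by forcing each new invocation to live inside the level set produced by the previous one, and the hereditary preservation of strong correlation is precisely what guarantees that the final common restriction to $L_K$ causes no damage. Modulo this bookkeeping, the argument is a direct iterated application of Theorem \ref{t33}.
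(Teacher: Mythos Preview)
Your opening observation (hereditariness of strong $\theta$-correlation under root-preserving vector strong subtrees) and the ensuing reduction are correct and match the paper's first paragraph verbatim. The paper then proceeds differently: it observes that the set $\mathcal{C}\subseteq[\bfct]_{\strong}$ of vector strong subtrees admitting a strongly correlated pair is $F_\sigma$ and, by Theorem~\ref{t33}, meets every $[\bfcz]_{\strong}$; Milliken's partition theorem then yields $\bfcs$ with $[\bfcs]_{\strong}\subseteq\mathcal{C}$, and since each $\suc_\bfcs(\bfs)\in[\bfcs]_{\strong}$ the result follows in one line.

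Your direct fusion, however, has a genuine gap. Theorem~\ref{t33} does \emph{not} return a vector strong subtree with the same root as its input: applied to $\suc_{\bfcu^{(n)}}(\bfs_i)$ it yields some $\bfcr_{\bfs_i}$ whose root may sit at a strictly deeper level than $\bfs_i$, so the triple you obtain is attached to the wrong node. This cannot be repaired by your final restriction ``to level set $L_K$'': since $\min L_K>\min L_i$ for $i<K$, that restriction moves the root of $\bfcr_{\bfs_i}$, and the hereditary property you correctly isolated applies only when the root is preserved---so strong correlation is lost precisely when you invoke it. There is a second, independent problem for $d\meg 2$: the instruction ``let $\bfcu^{(n+1)}$ coincide \ldots\ with the shrunken $\bfcr_{\bfs_i}$'s above each $\bfs_i$'' does not define a vector strong subtree, because distinct $\bfs_i,\bfs_j\in\otimes\bfcs(n)$ share coordinates, and one cannot independently prescribe $\suc_{\bfcu^{(n+1)}}(\bfs_i)$ for each element of the level product while keeping the coordinate trees well-defined.

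The paper does remark that a fusion proof along the lines of Proposition~\ref{pns4} is possible, but it requires $\hl\big(\sum_i b_{T_i}\big)$ as an additional pigeon-hole principle to handle exactly these synchronization issues---iterated applications of Theorem~\ref{t33} alone do not suffice.
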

\begin{proof}
We start with the following observation. Let $\bfcr$ be a vector strong subtree of $\bfct$, $w\in W$ and $0<\theta\mik 1$
and assume that the pair $(\bfcr,w)$ is strongly $\theta$-correlated with respect to $D$. Then for every vector strong subtree
$\bfcz$ of $\bfcr$ with $\bfcz(0)=\bfcr(0)$ the pair $(\bfcz,w)$ is also strongly $\theta$-correlated with respect to $D$.

Therefore, what we need to find is a vector strong subtree $\bfcs$ of $\bfct$, a family $\{w_\bfs:\bfs\in\otimes\bfcs\}$
in $W$ and a family $\{\theta_\bfs:\bfs\in\otimes\bfcs\}$ of reals in $(0,1]$ such that for every $\bfs\in\otimes\bfcs$ the
pair $\big(\suc_{\bfcs}(\bfs),w_\bfs\big)$ is strongly $\theta_\bfs$-correlated with respect to $D$. This can be proved using
$\hl\big(\sum_{i=1}^d b_{T_i}\big)$ as pigeonhole principle, Theorem \ref{t33} and arguing as in the proof of Proposition
\ref{pns4}. We prefer, however, to give a very simple proof which is based on Theorem \ref{t33} and on the work of K. Milliken
on Ramsey properties of strong subtrees. For every vector strong subtree $\bfcz$ of $\bfct$ let $[\bfcz]_{\strong}$
be the set of all vector strong subtrees of $\bfcz$ and notice that $[\bfcz]_{\strong}$ is $G_{\delta}$ (hence Polish) subspace
of $2^{T_1}\times ... \times 2^{T_d}$. Now let $\mathcal{C}$ be the subset of $[\bfct]_{\strong}$ defined by
\begin{eqnarray*}
\bfcr\in\mathcal{C} & \Leftrightarrow & \text{there exist } w\in W \text{ and } 0<\theta\mik 1 \text{ such that the pair}\\\
& & (\bfcr,w) \text{ is strongly $\theta$-correlated with respect to } D.
\end{eqnarray*}
Notice that $\mathcal{C}$ is an $F_{\sigma}$ subset of $[\bfct]_{\strong}$. Moreover, by Theorem \ref{t33}, we see that $\mathcal{C}
\cap [\bfcz]_{\strong}\neq\varnothing$ for every vector strong subtree $\bfcz$ of $\bfct$. By \cite[Theorem 2.1]{Mi2},
there exists a vector strong subtree $\bfcs$ of $\bfct$ such that $[\bfcs]_{\strong}\subseteq\mathcal{C}$.
Observing that $\suc_\bfcs(\bfs)\in [\bfcs]_{\strong}$ for every $\bfs\in\otimes\bfcs$, the result follows.
\end{proof}

\subsection{A consequence of $\dhl(d)$}

In this subsection we shall obtain a consequence of $\dhl(d)$ which is stated within the context of dense level
selections. It will be used in the proof of Theorem \ref{t33}.
\begin{prop} \label{p35}
Let $d\meg 1$ and assume that $\mathrm{DHL}(d)$ holds. Let $\bfcz=(Z_1,...,Z_d)$ be a vector homogeneous tree,
$W$ a homogeneous tree, $0<\eta\mik 1$ and $B:\otimes\bfcz\to 2^W$ an $\eta$-dense level selection. Then for every
$p\in\{0,...,b_W-1\}$ there exist $\bfcf\in\fan(\bfcz)$ and $w\in B(\bfz_0)$, where $\bfz_0=\bfcf(0)$, such that
\[ \bigcap_{\bfz\in\otimes\bfcf(1)} B(\bfz)\cap \suc_W(w^{\con_W}\!p)\neq\varnothing. \]
\end{prop}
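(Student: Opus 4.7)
The plan is to apply $\dhl(d)$ to a well-chosen subset of $\otimes\bfcz$ constructed from a ``generic'' branch of $W$; the key trick is to encode the constraint $\suc_W(w^{\con_W}\!p)$ appearing in the conclusion into the definition of the set itself, by forcing the branch to turn in direction $p$ at the right levels.

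Let $[W]$ denote the set of branches of $W$, i.e., sequences $\beta=(\beta(k))_{k\meg 0}$ with $\beta(k)\in W(k)$ and $\beta(k+1)\in\immsuc_W(\beta(k))$, equipped with the uniform product probability measure $\mu$ (each of the $b_W$ immediate successors chosen with probability $1/b_W$). For $\beta\in[W]$ define
\[ D_\beta=\big\{\bfz\in\otimes\bfcz:\ \beta(l_n)\in B(\bfz)\ \text{and}\ \beta(l_n+1)=\beta(l_n)^{\con_W}\!p,\ \text{where}\ n=\ell_\bfcz(\bfz)\big\}. \]
For each $\bfz\in\otimes\bfcz(n)$, independence of the branch choices at distinct levels together with $B(\bfz)\subseteq W(l_n)$ gives $\mu(\{\beta:\bfz\in D_\beta\})=|B(\bfz)|\cdot b_W^{-(l_n+1)}\meg\eta/b_W$. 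Summing over $\bfz\in\otimes\bfcz(n)$ and dividing by $|\otimes\bfcz(n)|$ yields $\int f_n\, d\mu\meg\eta/b_W$ for every $n\in\nn$, where $f_n(\beta)=|D_\beta\cap\otimes\bfcz(n)|/|\otimes\bfcz(n)|$. Since $f_n\mik 1$, the reverse Fatou inequality gives $\int\limsup_n f_n\, d\mu\meg\limsup_n\int f_n\, d\mu\meg\eta/b_W>0$, so some $\beta^*\in[W]$ satisfies $\limsup_n f_n(\beta^*)>0$; equivalently, $D_{\beta^*}$ has positive upper density in $\otimes\bfcz$.

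Invoking $\dhl(d)$ produces a vector strong subtree $\bfcr$ of $\bfcz$ with $\otimes\bfcr\subseteq D_{\beta^*}$; write $L_\bfcz(\bfcr)=\{m_0<m_1<\ldots\}$ and set $\bfcf=\bfcr\upharpoonright 1\in\fan(\bfcz)$, $\bfz_0=\bfcr(0)\in\otimes\bfcz(m_0)$, and $w=\beta^*(l_{m_0})$. From $\bfz_0\in D_{\beta^*}$ one reads off $w\in B(\bfz_0)$ and $\beta^*(l_{m_0}+1)=w^{\con_W}\!p$. For each $\bfr\in\otimes\bfcf(1)\subseteq\otimes\bfcz(m_1)$, the membership $\bfr\in D_{\beta^*}$ gives $\beta^*(l_{m_1})\in B(\bfr)$, and $l_{m_1}>l_{m_0}$ forces $\beta^*(l_{m_1})\in\suc_W(\beta^*(l_{m_0}+1))=\suc_W(w^{\con_W}\!p)$. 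Therefore $\beta^*(l_{m_1})$ lies in $\bigcap_{\bfr\in\otimes\bfcf(1)}B(\bfr)\cap\suc_W(w^{\con_W}\!p)$, which is thus non-empty. The only delicate point is producing a single branch that is simultaneously $B$-dense at many levels and turns in direction $p$ at each of those levels; the averaging/reverse-Fatou step resolves both requirements at once by baking the $p$-turn into the definition of $D_\beta$ before the application of $\dhl(d)$.
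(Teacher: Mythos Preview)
Your argument is correct, and it takes a genuinely different route from the paper's. The paper first runs a Fubini argument on each level to extract ``popular'' nodes $C_n\subseteq W(l_n)$ of density at least $\eta/2$, then invokes $\dhl(1)$ (Theorem~\ref{ta1}) to thread a chain $(w_k)$ through the $C_n$'s that always moves in direction $p$, and finally applies $\dhl(d)$ to the set $B'=\bigcup_k\{\bfz\in\otimes\bfcz(n_k):w_k\in B(\bfz)\}$. You collapse the first two steps into a single probabilistic selection: instead of building the $p$-chain in $W$ by hand via $\dhl(1)$, you average over branches of $W$ with the $p$-turn constraint built into $D_\beta$, and reverse Fatou hands you a branch $\beta^*$ along which $D_{\beta^*}$ already has positive upper density. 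The application of $\dhl(d)$ is then the same in both proofs.

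What each approach buys: your route is self-contained in that it does not call Theorem~\ref{ta1} as a black box, and the measure-theoretic packaging is arguably cleaner. The paper's route, on the other hand, stays entirely within the combinatorial language of the rest of the argument and makes the dependence on the one-dimensional case explicit, which is consistent with the inductive organization of the whole paper (the base case $\dhl(1)$ is isolated in Appendix~A precisely so it can be reused). Both produce the same witness structure: a root-level node $w=\beta^*(l_{m_0})$ (respectively $w_{k_0}$) and a deeper node $\beta^*(l_{m_1})$ (respectively $w_{k_1}$) lying in the $p$-cone above $w$ and in every $B(\bfz)$ for $\bfz\in\otimes\bfcf(1)$.
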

\begin{proof}
We fix $p\in\{0,...,b_W-1\}$. Let $(l_n)$ be the strictly increasing sequence in $\nn$ such that for every $n\in\nn$
and every $\bfz\in\otimes\bfcz(n)$ we have $B(\bfz)\subseteq W(l_n)$ and $\dens\big(B(\bfz)\big)\meg \eta$.
For every $n\in\nn$ we define $C_n\subseteq W(l_n)$ by the rule
\[ w\in C_n\Leftrightarrow |\{\bfz\in\otimes\bfcz(n): w\in B(\bfz)\}|\meg \eta/2 \ |\otimes\bfcz(n)|. \]
\begin{claim} \label{fubini}
For every $n\in\nn$ we have $\dens(C_n)\meg \eta/2$.
\end{claim}
\begin{proof}[Proof of Claim \ref{fubini}]
This is a rather standard estimate and follows using a Fubini-type argument. Indeed, let
\[ E_n=\big\{ (\bfz,w)\in\otimes\bfcz(n)\times W(l_n): w\in B(\bfz)\big\}. \]
Since $\dens\big(B(\bfz)\big)\meg\eta$ for every $\bfz\in\otimes\bfcz(n)$, we have
\[ \eta \cdot |\otimes\bfcz(n)| \cdot |W(l_n)| \mik |E_n|. \]
On the other hand, by the definition of the set $C_n$, we get
\[ |E_n| \mik |C_n| \cdot |\otimes\bfcz(n)| + (\eta/2) \cdot |\otimes\bfcz(n)| \cdot |W(l_n)|. \]
Therefore, $\dens(C_n)\meg \eta/2$. The proof of Claim \ref{fubini} is completed.
\end{proof}
By Claim \ref{fubini} and \cite[Theorem 2.3]{BV}, we may find a strictly increasing sequence $(n_k)$ in $\nn$
and a sequence $(w_k)$ in $W$ such that for every $k,m\in\nn$ with $k<m$ we have
\begin{enumerate}
\item[(a)] $w_k\in C_{n_k}$ and
\item[(b)] $w_m\in \suc_W(w_k^{\con_W}\!p)$.
\end{enumerate}
We define $B'\subseteq \otimes\bfcz$ by
\[ B'=\bigcup_{k\in\nn} \big\{\bfz\in\otimes\bfcz(n_k): w_k\in B(\bfz)\big\} \]
By (a) and the definition of the set $C_{n_k}$, we see that
\[ \limsup_{n\to\infty} \frac{|B'\cap\otimes\bfcz(n)|}{|\otimes\bfcz(n)|} =
\limsup_{k\to\infty} \frac{|B'\cap\otimes\bfcz(n_k)|}{|\otimes\bfcz(n_k)|} \meg \eta/2>0. \]
Therefore, using our hypothesis that $\dhl(d)$ holds, it is possible to find a vector strong subtree $\bfcr$ of $\bfcz$
such that $\otimes\bfcr\subseteq B'$. We set $\bfcf=\bfcr\upharpoonright 1\in\fan(\bfcz)$ and $\bfz_0=\bfcf(0)$.
Let $k_0$ and $k_1$ be the unique integers such that $\bfz_0\in\otimes\bfcz(n_{k_0})$
and $\otimes\bfcf(1)\subseteq\otimes\bfcz(n_{k_1})$. Clearly $k_0<k_1$. Notice that
\[ w_{k_0}\in B(\bfz_0)\]
since $\bfz_0\in\otimes\bfcr\subseteq B'$. Moreover,
\[ w_{k_1}\in \bigcap_{\bfz\in\otimes\bfcf(1)} B(\bfz) \]
since $\otimes\bfcf(1)\subseteq\otimes\bfcr\subseteq B'$. Using (b), we conclude that
\[ w_{k_1}\in\bigcap_{\bfz\in\otimes\bfcf(1)} B(\bfz)\cap \suc_W(w_{k_0}^{\con_W}\!p). \]
The proof of Proposition \ref{p35} is completed.
\end{proof}

\subsection{Proof of Theorem \ref{t33}}

The proof is a quest of a contradiction. So, assume that there exist a vector homogeneous tree
$\bfct=(T_1,...,T_d)$, a homogeneous tree $W$, a constant $0<\ee\mik 1$ and an $\ee$-dense level selection
$D:\otimes\bfct\to 2^W$ such that
\begin{enumerate}
\item[(H)] for every vector strong subtree $\bfcr$ of $\bfct$, every $w\in W$ and every $0<\theta\mik 1$
the pair $(\bfcr,w)$ is not strongly $\theta$-correlated with respect to $D$.
\end{enumerate}
\textit{The vector homogeneous tree $\bfct$ and the $\ee$-dense level selection $D:\otimes\bfct\to 2^W$ will be fixed throughout
the proof}.  We will use hypothesis (H) to derive a contradiction. Our strategy is to construct a vector strong subtree $\bfcz$
of $\bfct$ and an $(\ee/2b_W)$-dense level selection $B:\otimes\bfcz\to 2^W$ that violates the conclusion of Proposition \ref{p35}
for some $p_0\in\{0,...,b_W-1\}$. The construction will be done in several intermediate steps. For notational simplicity, for every
$i\in\{1,...,d\}$ by $b_i$ we shall denote the branching number of the tree $T_i$.

\subsection*{Step 1: selection of a rapidly decreasing sequence}

For every $n\in\nn$ with $n\meg 1$ we define
\begin{equation} \label{e31new}
\Theta_n =|\fan(\bfct,n)|.
\end{equation}
Setting $\beta=\prod_{i=1}^d b_i$ we see that $\Theta_n \mik \Theta_{n+1}$ and $\beta^{n-1}\mik \Theta_n \mik 2^{\beta^n}$
for every $n\in\nn$ with $n\meg 1$. We will need the following elementary facts.
\begin{fact} \label{f36}
For every vector strong subtree $\bfcs$ of $\bfct$ and every $n\in\nn$ with $n\meg 1$ we have $|\fan(\bfcs,n)|=\Theta_n$.
\end{fact}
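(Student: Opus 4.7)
My plan is to observe that $|\fan(\bfcs, n)|$ is a combinatorial invariant depending only on the branching-number tuple of $\bfcs$ and on $n$, not on the specific embedding of $\bfcs$ into $\bfct$. Since $\bfcs$ is a vector strong subtree of the vector homogeneous tree $\bfct$, condition (c) of the definition of a strong subtree guarantees that in each component $S_i$ every node still has exactly $b_{T_i}$ immediate successors in $S_i$; hence $\bfcs$ is itself a vector homogeneous tree with $b_{\bfcs}=b_{\bfct}$.

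Given this, I would carry out an explicit count of $|\fan(\bfct, n)|$ parametrized by the level $k\in\{0,\dots,n-1\}$ at which the root of the fan sits. A vector fan $\bfcf\in\fan(\bfct,n)$ with $\bfcf(0)\in\otimes\bfct(k)$ is determined by two pieces of data: (i) a root $\bft_0=(t_0^1,\dots,t_0^d)\in\otimes\bfct(k)$, of which there are $\prod_{i=1}^d b_{T_i}^k$; and (ii) for each $i\in\{1,\dots,d\}$ a set $F_i\subseteq T_i(n)$ of size $b_{T_i}$ which, by the strong-subtree condition (c) applied to the height-$2$ tree $\{t_0^i\}\cup F_i$, is forced to contain exactly one element from among the $b_{T_i}^{n-k-1}$ descendants at level $n$ of each of the $b_{T_i}$ immediate successors of $t_0^i$ in $T_i$. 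This gives $\prod_{i=1}^d b_{T_i}^{b_{T_i}(n-k-1)}$ choices at step (ii). Summing over $k$ yields
\[
|\fan(\bfct,n)| \ =\ \sum_{k=0}^{n-1}\ \prod_{i=1}^d b_{T_i}^{k+b_{T_i}(n-k-1)},
\]
a number depending only on $(b_{T_1},\dots,b_{T_d})$ and on $n$.

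The identical counting argument applied to $\bfcs$, using $b_{S_i}=b_{T_i}$ for each $i$, produces the same sum; hence $|\fan(\bfcs,n)|=|\fan(\bfct,n)|=\Theta_n$. There is no serious obstacle here: the only things to verify are that $\bfcs$ inherits the branching structure of $\bfct$ (immediate from condition (c)) and that the enumeration in step (ii) is correctly in bijection with the choices of $F_i$, which is a direct unwinding of the strong-subtree definition in height $2$. Alternatively, one could bypass the count entirely by invoking the vector canonical isomorphism $\bfci(\bfct,\bfcs)$, which transports strong subtrees to strong subtrees while preserving levels, and thus induces a bijection $\fan(\bfct,n)\to\fan(\bfcs,n)$; I would mention this as an alternative but prefer the direct count since it also recovers the bounds $\beta^{n-1}\mik\Theta_n\mik 2^{\beta^n}$ already stated in the text.
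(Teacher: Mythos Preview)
Your argument is correct. The paper does not actually supply a proof of this fact: it is introduced with the words ``We will need the following elementary facts'' and stated without justification. Your observation that a vector strong subtree $\bfcs$ of $\bfct$ is itself a vector homogeneous tree with the same branching tuple (immediate from condition (c) and the paper's Convention/Definition) is exactly the point, and either of your two arguments---the explicit count or the bijection via the vector canonical isomorphism $\bfci(\bfct,\bfcs)$---establishes the claim cleanly.

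One small remark: your explicit formula is fine, but strictly speaking you do not need it. The canonical-isomorphism argument is shorter and more in the spirit of the surrounding text, since it avoids any enumeration and makes transparent that the invariant in question depends only on $b_\bfct$ and $n$. The counting route does have the side benefit you mention, namely that the $k=n-1$ term alone already gives the lower bound $\beta^{n-1}\mik\Theta_n$ recorded in the paper.
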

\begin{fact} \label{newfact}
For every vector strong subtree $\bfcr$ of $\bfct$, every vector strong subtree $\bfcs$ of $\bfcr$ and every $n\in\nn$
with $n\meg 1$ there exists $k\in\nn$ with $k\meg n$ such that $\fan(\bfcs,n)\subseteq\fan(\bfcr,k)$.
\end{fact}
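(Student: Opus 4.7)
The claim is essentially a bookkeeping statement about how the levels of nested vector strong subtrees relate, so my plan is to unwind the relevant definitions and identify $k$ explicitly. The main conceptual ingredient is already recorded in the paper, namely the observation immediately following Definition \ref{dns2} that $\fan(\bfcs)\subseteq\fan(\bfcr)$ whenever $\bfcs$ is a vector strong subtree of $\bfcr$.

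Since $\bfcs=(S_1,\dots,S_d)$ is a vector strong subtree of $\bfcr=(R_1,\dots,R_d)$, the trees $S_i$ share a common level set $L_{R_1}(S_1)=\cdots=L_{R_d}(S_d)$; enumerate it as $\{l_0<l_1<\cdots\}$. By the definition of a strong subtree, $S_i(m)\subseteq R_i(l_m)$ for every $i$ and every $m$, and the level enumeration forces $l_m\meg m$. The natural choice is therefore $k=l_n$, which satisfies $k\meg n$ and $\otimes\bfcs(n)\subseteq\otimes\bfcr(k)$.

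With this $k$ fixed, I would verify $\fan(\bfcs,n)\subseteq\fan(\bfcr,k)$ directly. Let $\bfcf\in\fan(\bfcs,n)$. By definition of $\fan(\bfcs,n)$, we have $\bfcf\in\fan(\bfcs)$ and $\otimes\bfcf(1)\subseteq\otimes\bfcs(n)$. The first observation from \S 3 gives $\bfcf\in\fan(\bfcs)\subseteq\fan(\bfcr)$, while the inclusion $\otimes\bfcs(n)\subseteq\otimes\bfcr(k)$ yields $\otimes\bfcf(1)\subseteq\otimes\bfcr(k)$. Combining these two pieces is exactly the definition of membership in $\fan(\bfcr,k)$.

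I do not foresee any real obstacle; the only subtlety is the observation that a strong subtree structure guarantees a \emph{common} level index $l_n$ across the $d$ coordinates, which is precisely where the vector strong subtree definition (requiring equal level sets) is used. Once that is noted, the proof reduces to a one-line chain of inclusions.
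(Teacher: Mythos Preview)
Your proposal is correct and is exactly the natural argument; the paper itself states Fact~\ref{newfact} as an elementary fact and gives no proof, so there is nothing further to compare.
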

We define a sequence $(\theta_n)$ in $\rr$ by the rule $\theta_0=1$ and
\begin{equation} \label{e332}
\theta_n=\frac{\ee}{2b_W\Theta_n}
\end{equation}
for every $n\in\nn$ with $n\meg 1$. Notice that for every $n\in\nn$ we have
\begin{equation} \label{e333}
\theta_{n+1}\mik \theta_n.
\end{equation}

\subsection*{Step 2: a family $\{\mathcal{F}_n: n\meg 1\}$ of subsets of $\fan(\bfct)$}

Let $(\theta_n)$ be the sequence defined in Step 1. For every $n\in\nn$ with $n\meg 1$ we define a subset $\mathcal{F}_n$
of $\fan(\bfct)$ by the rule
\begin{eqnarray*}
\bfcf\in\mathcal{F}_n & \Leftrightarrow & \text{there exists a map } \phi:D\big(\bfcf(0)\big)\to\{0,...,b_W-1\} \\
& & \text{such that for every } w\in D\big(\bfcf(0)\big) \text{ if } p=\phi(w), \\
& & \text{then } \dens\Big( \bigcap_{\bft\in\otimes \bfcf(1)} D(\bft) \ \big| \ w^{\con_W}\!p \ \Big)\mik \theta_n.
\end{eqnarray*}
For every $\bfcf\in\mathcal{F}_n$ there exists a canonical map $\phi^n_{\bfcf}$ witnessing that $\bfcf$ belongs to
$\mathcal{F}_n$. It is defined by setting  $\phi^n_{\bfcf}(w)$ to be the least $p\in\{0,...,b_W-1\}$ for which the
above inequality is satisfied. We will call the map $\phi^n_{\bfcf}$ the \textit{witness} of $\bfcf$.

The next lemma reduces hypothesis (H) to certain properties of the sets in the family $\{\mathcal{F}_n:n\meg 1\}$.
\begin{lem} \label{l31newstaff}
Under hypothesis \emph{(H)}, for every $n\in\nn$ with $n\meg 1$ the following hold.
\begin{enumerate}
\item[(a)] We have $\mathcal{F}_{n+1}\subseteq\mathcal{F}_n$.
\item[(b)] For every vector strong subtree $\bfcs$ of $\bfct$ there exists $\bfcf\in\fan(\bfcs)\cap\mathcal{F}_n$
with $\bfcf(0)=\bfcs(0)$.
\end{enumerate}
\end{lem}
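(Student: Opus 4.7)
Part (a) should be immediate from the inequality $\theta_{n+1}\mik\theta_n$ recorded in \eqref{e333}: any map $\phi$ which witnesses $\bfcf\in\mathcal{F}_{n+1}$ automatically witnesses $\bfcf\in\mathcal{F}_n$ as well.

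For part (b), I would fix a vector strong subtree $\bfcs$ of $\bfct$, set $\bfs_0=\bfcs(0)$, and enumerate the finite set $D(\bfs_0)$ as $\{w_1,\ldots,w_k\}$. For each $j\in\{1,\ldots,k\}$ let $\mathcal{A}_j$ be the collection of $\bfcf\in\fan(\bfcs)$ with $\bfcf(0)=\bfs_0$ such that some $p\in\{0,\ldots,b_W-1\}$ achieves $\dens\bigl(\bigcap_{\bft\in\otimes\bfcf(1)}D(\bft)\mid w_j^{\con_W}\!p\bigr)\mik\theta_n$. The goal is to produce a single $\bfcf\in\fan(\bfcs)$ rooted at $\bfs_0$ that lies in $\mathcal{A}_1\cap\cdots\cap\mathcal{A}_k$; for any such $\bfcf$, taking $\phi(w_j)$ to be the least admissible $p$ immediately exhibits $\bfcf\in\mathcal{F}_n$.

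The plan for producing this $\bfcf$ is to build inductively a decreasing sequence $\bfcs=\bfcz_0\supseteq\bfcz_1\supseteq\cdots\supseteq\bfcz_k$ of vector strong subtrees of $\bfct$, all rooted at $\bfs_0$, so that every fan of $\bfcz_i$ rooted at $\bfs_0$ belongs to $\mathcal{A}_1\cap\cdots\cap\mathcal{A}_i$. In the inductive step I would three-color $\fan(\bfcz_{i-1})$ by declaring $\bfcf$ to be of color (i) if $\bfcf(0)\neq\bfs_0$, of color (ii) if $\bfcf(0)=\bfs_0$ and $\bfcf\in\mathcal{A}_i$, and of color (iii) if $\bfcf(0)=\bfs_0$ and $\bfcf\notin\mathcal{A}_i$. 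Proposition \ref{pns3} then yields a vector strong subtree $\bfcz_i\subseteq\bfcz_{i-1}$ rooted at $\bfs_0$ whose fans rooted at $\bfs_0$ are monochromatic. Color (i) is impossible since $\bfcz_i\upharpoonright 1$ is itself a fan of $\bfcz_i$ rooted at $\bfs_0$. Color (iii) is excluded by applying hypothesis (H) to $(\bfcz_i,w_i,\theta_n)$: since $w_i\in D(\bfs_0)=D(\bfcz_i(0))$ forces condition (C1) to hold, (C2) must fail, and this produces a fan of $\bfcz_i$ rooted at $\bfs_0$ which belongs to $\mathcal{A}_i$. Hence the color must be (ii), and combined with the inductive hypothesis (inherited through $\fan(\bfcz_i)\subseteq\fan(\bfcz_{i-1})$) this closes the step. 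Taking $\bfcf=\bfcz_k\upharpoonright 1$ then completes the argument.

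The hard part will be this uniform choice of $\bfcf$ across all of $D(\bfs_0)$ simultaneously: hypothesis (H) on its own only provides, for each individual $w_j$, a fan whose existence may depend on $j$, whereas membership in $\mathcal{F}_n$ demands one fan that handles every element of $D(\bfs_0)$ at once. The essential use of Proposition \ref{pns3} is precisely to homogenize the behaviour of fans after passing to a vector strong subtree, which lets the iteration absorb the $w_j$'s one at a time without losing information gained at previous stages.
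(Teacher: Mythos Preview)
Your argument is correct. Part (a) is handled identically to the paper. For part (b), however, the paper takes a shorter contrapositive route: assuming that \emph{no} fan of $\bfcs$ rooted at $\bfs_0$ lies in $\mathcal{F}_n$, every such fan $\bfcf$ comes with a node $w_{\bfcf}\in D(\bfs_0)$ witnessing the failure (i.e.\ the density is $\meg\theta_n$ for \emph{all} $p$). One then applies Proposition~\ref{pns3} \emph{once}, coloring fans by the value of $w_{\bfcf}$, to obtain a vector strong subtree $\bfcr$ with the same root on which $w_{\bfcf}$ is constant, equal to some $w_0$. The pair $(\bfcr,w_0)$ is then strongly $\theta_n$-correlated, contradicting (H).

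The two arguments are in a sense dual: the paper colors by the obstructing node and homogenizes in a single shot, whereas you iterate through the nodes of $D(\bfs_0)$ one at a time, each step requiring its own invocation of Proposition~\ref{pns3}. Your approach costs $|D(\bfs_0)|$ applications of the proposition against the paper's one, but it is constructive rather than by contradiction and perhaps conceptually closer to the later uses of the lemma. Either way, the key idea---using (H) to rule out the ``bad'' color and Proposition~\ref{pns3} to propagate this---is the same.
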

\begin{proof}
Part (a) follows by (\ref{e333}) and the relevant definitions. For part (b) we will argue by contradiction. So,
assume that there exist $n_0\in\nn$ with $n_0\meg 1$ and a vector strong subtree $\bfcs$ of $\bfct$ such that
for every $\bfcf\in\fan(\bfcs)$ with $\bfcf(0)=\bfcs(0)$ we have that $\bfcf\notin\mathcal{F}_{n_0}$. This implies
that for every $\bfcf\in\fan(\bfcs)$ with $\bfcf(0)=\bfcs(0)$ there exists $w_{\bfcf}\in D\big(\bfcs(0)\big)$
such that for every $p\in\{0,...,b_W-1\}$ we have
\[ \dens\Big( \bigcap_{\bfs\in\otimes \bfcf(1)} D(\bfs) \ \big| \ w_{\bfcf}^{\con_W}\!p \ \Big)\meg \theta_{n_0}. \]
The set $D\big(\bfcs(0)\big)$ is finite. Therefore, by Proposition \ref{pns3}, there exist a vector strong
subtree $\bfcr$ of $\bfcs$ with $\bfcr(0)=\bfcs(0)$ and $w_0\in D\big(\bfcs(0)\big)$ such that $w_\bfcf=w_0$
for every $\bfcf\in\fan(\bfcr)$ with $\bfcf(0)=\bfcr(0)$. It follows that the pair $(\bfcr,w_0)$ is strongly
$\theta_{n_0}$-correlated with respect to $D$ and this contradicts hypothesis (H). The proof of Lemma
\ref{l31newstaff} is completed.
\end{proof}

\subsection*{Step 3: control of vector fans with a fixed root}

Let $\bfcr$ be an arbitrary vector strong subtree of $\bfct$. Our goal in this step is to construct a vector strong
subtree $\bfcs$ of $\bfcr$ with the same root as $\bfcr$ such that for every vector fan $\bfcf$ of $\bfcs$ with
$\bfcf(0)=\bfcs(0)$ we have significant control over the quantity appearing in the left side of inequality (\ref{e311}).
Precisely, we will show the following.
\begin{lem} \label{l37}
Let $(\theta_n)$ be the sequence defined in Step 1. Also let $\bfcr$ be a vector strong subtree of $\bfct$ and set
$\bfr_0=\bfcr(0)$. Then there exist a vector strong subtree $\bfcs$ of $\bfcr$ with $\bfcs(0)=\bfr_0$ and a map
$\phi:D(\bfr_0)\to \{0,...,b_W-1\}$ such that the following is satisfied. For every $n\in\nn$ with $n\meg 1$,
every $\bfcf\in\fan(\bfcs,n)$ with $\bfcf(0)=\bfr_0$ and every $w\in D(\bfr_0)$ if $p=\phi(w)$, then
\[ \dens\Big( \bigcap_{\bfs\in\otimes \bfcf(1)} D(\bfs) \ \big| \ w^{\con_W}\!p \ \Big)\mik \theta_n. \]
\end{lem}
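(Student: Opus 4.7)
The plan is to combine the partition result Proposition \ref{pns4} with the Ramsey-type statement Proposition \ref{pns3} in order to uniformize the choice of witness across all relevant vector fans, so that a single map $\phi$ simultaneously handles every level.

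First, I would apply Proposition \ref{pns4} to the vector strong subtree $\bfcr$ and to the family $\{\mathcal{F}_n:n\meg 1\}$ introduced in Step 2. Property (P) of that proposition is exactly what Lemma \ref{l31newstaff}(b) provides (under hypothesis (H), which is in force throughout). The output is a vector strong subtree $\bfcz$ of $\bfcr$ with $\bfcz(0)=\bfr_0$ such that every $\bfcf\in\fan(\bfcz,n)$ with $\bfcf(0)=\bfr_0$ belongs to $\mathcal{F}_n$; consequently, each such fan carries its canonical witness $\phi^n_{\bfcf}\colon D(\bfr_0)\to\{0,\ldots,b_W-1\}$. Note that each $\bfcf\in\fan(\bfcz)$ lies in a unique $\fan(\bfcz,n)$, so the integer $n$ is determined by $\bfcf$ and the color $\phi^n_{\bfcf}$ is unambiguous.

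Next, I would color each $\bfcf\in\fan(\bfcz)$ with $\bfcf(0)=\bfr_0$ by its canonical witness $\phi^n_{\bfcf}$, extending arbitrarily to the remaining fans in $\fan(\bfcz)$. Since $D(\bfr_0)$ is finite, this is a finite coloring (with at most $b_W^{|D(\bfr_0)|}$ color classes), so Proposition \ref{pns3} yields a vector strong subtree $\bfcs$ of $\bfcz$ with $\bfcs(0)=\bfr_0$ and a single map $\phi\colon D(\bfr_0)\to\{0,\ldots,b_W-1\}$ such that $\phi^n_{\bfcf}=\phi$ for every $\bfcf\in\fan(\bfcs)$ with $\bfcf(0)=\bfr_0$, where $n$ denotes the (unique) level of $\bfcf$ in $\bfcz$.

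Finally, to verify the desired property, fix $n\meg 1$ and $\bfcf\in\fan(\bfcs,n)$ with $\bfcf(0)=\bfr_0$. By Fact \ref{newfact} there exists $k\meg n$ with $\bfcf\in\fan(\bfcz,k)$, so $\bfcf\in\mathcal{F}_k$ with canonical witness $\phi$; hence for every $w\in D(\bfr_0)$ and $p=\phi(w)$,
\[
\dens\Big(\bigcap_{\bfs\in\otimes\bfcf(1)} D(\bfs)\ \big|\ w^{\con_W}\!p\,\Big)\mik \theta_k\mik \theta_n,
\]
where the last inequality is (\ref{e333}). The main point is conceptual rather than computational: hypothesis (H) is first converted, via Lemma \ref{l31newstaff}, into a coherent nested family $\{\mathcal{F}_n\}$ that feeds into Proposition \ref{pns4}; the rest is level bookkeeping, exploiting the monotonicity of both $(\mathcal{F}_n)$ and $(\theta_n)$ together with a routine Ramsey uniformization of the finitely many witness maps.
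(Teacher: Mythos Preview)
Your proof is correct and follows essentially the same route as the paper: apply Proposition \ref{pns4} (justified by Lemma \ref{l31newstaff}(b)) to obtain $\bfcz$, then use Proposition \ref{pns3} to uniformize the finitely many witness maps into a single $\phi$, and verify via Fact \ref{newfact} and the monotonicity (\ref{e333}). The only cosmetic difference is that you explicitly mention extending the coloring arbitrarily to fans not rooted at $\bfr_0$, which the paper leaves implicit.
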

\begin{proof}
By part (b) of Lemma \ref{l31newstaff}, we may apply Proposition \ref{pns4} to the vector homogeneous tree $\bfct$,
the family $\{\mathcal{F}_n:n\meg 1\}$ and the vector strong subtree $\bfcr$ of $\bfct$. Therefore, there exists
a vector strong subtree $\bfcz$ of $\bfcr$ with $\bfcz(0)=\bfr_0$ such that for every $k\in\nn$ with $k\meg 1$
and every $\bfcf\in\fan(\bfcz,k)$ with $\bfcf(0)=\bfr_0$ we have that $\bfcf\in\mathcal{F}_k$. Let
$\phi^k_{\bfcf}:D(\bfr_0)\to\{0,...,b_W-1\}$ be the corresponding witness. The set $\{0,...,b_W-1\}^{D(\bfr_0)}$
is finite. By Proposition \ref{pns3}, there exist a vector strong subtree $\bfcs$ of $\bfcz$ with $\bfcs(0)=\bfr_0$
and a map $\phi:D(\bfr_0)\to\{0,...,b_W-1\}$ such that for every $\bfcf\in\fan(\bfcs)$ with $\bfcf(0)=\bfr_0$
if $k$ is the unique integer such that $\bfcf\in\fan(\bfcz,k)$, then $\phi^k_{\bfcf}=\phi$.
The vector tree $\bfcs$ and the map $\phi$ are as desired.

Indeed, let $n\in\nn$ with $n\meg 1$ and $\bfcf\in\fan(\bfcs,n)$ with $\bfcf(0)=\bfr_0$ be arbitrary. By
Fact \ref{newfact}, there exists $k\in\nn$ with $k\meg n$ such that $\bfcf\in\fan(\bfcz,k)$.
Let $w\in D(\bfr_0)$ be arbitrary. If $p=\phi(w)$, then $p=\phi^k_{\bfcf}(w)$. Hence,
\[ \dens\Big( \bigcap_{\bfs\in\otimes \bfcf(1)} D(\bfs) \ \big| \ w^{\con_W}\!p \ \Big)\mik
\theta_k \stackrel{(\ref{e333})}{\mik} \theta_n. \]
The proof of Lemma \ref{l37} is completed.
\end{proof}

\subsection*{Step 4: construction of an ``asymptotically sparse" vector tree}

In this step we will refine the construction presented in Step 3. Our goal is to construct an ``asymptotically sparse" vector
tree, i.e. a vector strong subtree $\bfcs$ of $\bfct$ for which we have control over the behavior of \textit{every} vector fan
of $\bfcs$. Specifically, we have the following.
\begin{lem} \label{l38}
Let $(\theta_n)$ be the sequence defined in Step 1. Then there exists a vector strong subtree $\bfcs$ of $\bfct$ with the following
property. For every $\bfs_0\in\otimes\bfcs$ there exists a map $\phi_{\bfs_0}: D(\bfs_0)\to \{0,...,b_W-1\}$ such that for every
$n\in\nn$ with $\ell_{\bfcs}(\bfs_0)<n$, every $\bfcf\in\fan(\bfcs,n)$ with $\bfcf(0)=\bfs_0$ and every $w\in D(\bfs_0)$
if $p=\phi_{\bfs_0}(w)$, then
\[ \dens\Big( \bigcap_{\bfs\in\otimes \bfcf(1)} D(\bfs) \ \big| \ w^{\con_W}\!p \ \Big)\mik \theta_n. \]
\end{lem}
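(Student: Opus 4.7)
The plan is to combine a shifted variant of Lemma \ref{l37} with Milliken's Ramsey theorem for strong subtrees (the same tool used in the proof of Corollary \ref{c34}). The challenge is that Lemma \ref{l37} only controls fans rooted at $\bfcs(0)$; to produce a bound for fans with an arbitrary root $\bfs_0 \in \otimes\bfcs$, I must compensate for the discrepancy between $\ell_{\bfcs}(\bfs_0)$ and the true depth $\ell_{\bfct}(\bfs_0)$ in $\bfct$.

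First I would check that the proof of Lemma \ref{l37} goes through verbatim if, for any fixed $L \in \nn$, the family $\{\mathcal{F}_n : n \meg 1\}$ is replaced by $\{\mathcal{F}_{n+L} : n \meg 1\}$. Part (a) of Lemma \ref{l31newstaff} for the shifted family is immediate from (\ref{e333}), while part (b) follows from hypothesis (H) applied with the positive constant $\theta_{n_0+L}$ in place of $\theta_{n_0}$. The result is a \emph{shifted} version of Lemma \ref{l37}: for every vector strong subtree $\bfcr$ of $\bfct$ and every $L \in \nn$ there exist a vector strong subtree $\bfcs$ of $\bfcr$ with $\bfcs(0) = \bfcr(0)$ and a map $\phi: D(\bfcr(0)) \to \{0, \ldots, b_W - 1\}$ such that, for every $n \meg 1$, every $\bfcf \in \fan(\bfcs, n)$ with $\bfcf(0) = \bfcr(0)$, every $w \in D(\bfcr(0))$, and $p = \phi(w)$,
\[
\dens\Big( \bigcap_{\bfs \in \otimes\bfcf(1)} D(\bfs) \ \big| \ w^{\con_W}\!p \, \Big) \mik \theta_{n+L}.
\]

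Next I would define $\mathcal{C} \subseteq [\bfct]_{\strong}$ as the set of vector strong subtrees $\bfcr$ of $\bfct$ satisfying the property above with $\bfcs = \bfcr$ and the specific shift $L = \ell_{\bfct}(\bfcr(0))$. Decomposing over the countable set of roots $\bft_0 \in \otimes\bfct$ and the finite set of candidate witnesses $\phi$, one checks that $\mathcal{C}$ is an $F_\sigma$ subset of $[\bfct]_{\strong}$. Applying the shifted version of Lemma \ref{l37} to an arbitrary $\bfcz \in [\bfct]_{\strong}$ with $L = \ell_{\bfct}(\bfcz(0))$ produces a member of $\mathcal{C} \cap [\bfcz]_{\strong}$. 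An appeal to \cite[Theorem 2.1]{Mi2}, exactly as in the proof of Corollary \ref{c34}, then yields a vector strong subtree $\bfcs$ of $\bfct$ with $[\bfcs]_{\strong} \subseteq \mathcal{C}$.

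To conclude, fix $\bfs_0 \in \otimes\bfcs$ and set $k = \ell_{\bfcs}(\bfs_0)$. Since $\suc_{\bfcs}(\bfs_0) \in [\bfcs]_{\strong} \subseteq \mathcal{C}$, we obtain a map $\phi_{\bfs_0}: D(\bfs_0)\to\{0,\ldots,b_W-1\}$ so that, for every $m \meg 1$ and every $\bfcf \in \fan(\suc_{\bfcs}(\bfs_0), m)$ with $\bfcf(0) = \bfs_0$, the density bound $\theta_{m + \ell_{\bfct}(\bfs_0)}$ holds. A fan $\bfcf \in \fan(\bfcs, n)$ with $\bfcf(0) = \bfs_0$ and $n > k$ is exactly a member of $\fan(\suc_{\bfcs}(\bfs_0), n - k)$, so the inherited bound is $\theta_{(n-k) + \ell_{\bfct}(\bfs_0)}$. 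Since $\bfcs$ is a vector strong subtree of $\bfct$ we have $\ell_{\bfct}(\bfs_0) \meg k$, and therefore this bound is at most $\theta_n$ by the monotonicity recorded in (\ref{e333}). The hard part of the plan is precisely the bookkeeping of the two depth functions, which is what forces the choice of shift $L = \ell_{\bfct}(\bfcr(0))$ in the definition of $\mathcal{C}$.
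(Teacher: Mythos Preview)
Your argument is correct, but it diverges from the paper's in two respects. First, where the paper builds the intermediate tree $\bfcv$ by an explicit fusion (the ``standard recursive construction'' alluded to after the observation that the class of trees in a good position is hereditary and dense), you invoke Milliken's theorem directly, exactly as the paper does later in Corollary~\ref{c34}. Second, and more interestingly, the two proofs handle the depth mismatch between $\ell_{\bfcs}(\bfs_0)$ and the index $n$ in opposite ways. The paper keeps the \emph{unshifted} conclusion of Lemma~\ref{l37} and compensates after the fact by thinning to levels $m_j\meg m_{j-1}+j$, so that a fan in $\fan(\bfcs,n)$ rooted at $\bfs_0$ lands in $\fan\big(\suc_{\bfcv}(\bfs_0),l\big)$ for some $l\meg n$. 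You instead bake the correction into the target class $\mathcal{C}$ by demanding the shifted bound $\theta_{n+\ell_{\bfct}(\bfcr(0))}$, and then recover $\theta_n$ from the trivial inequality $\ell_{\bfct}(\bfs_0)\meg \ell_{\bfcs}(\bfs_0)$. Your route is slightly slicker in that it avoids the auxiliary level-thinning step entirely; the paper's route is more self-contained in that it does not appeal to the Ramsey-theoretic black box of \cite{Mi2} at this stage.
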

\begin{proof}
Let us say that a vector strong subtree of $\bfct$ is in a \textit{good position} if it satisfies the conclusion
of Lemma \ref{l37}. That is, a vector strong subtree $\bfcz$ of $\bfct$ is in a good position if there exists a map 
$\phi:D\big(\bfcz(0)\big)\to\{0,...,b_W-1\}$ such that for every $k\in\nn$ with $k\meg 1$, every $\bfcf\in\fan(\bfcz,k)$
with $\bfcf(0)=\bfcz(0)$ and every $w\in D\big(\bfcz(0)\big)$ if $p=\phi(w)$, then
$\dens\big( \bigcap_{\bfz\in\otimes \bfcf(1)} D(\bfz) \ | \ w^{\con_W}\!p \big)\mik \theta_k$.

We notice two permanence properties of this notion. The first one is that it is \textit{hereditary} when passing
to vector subtrees. Precisely, if a vector strong subtree $\bfcz$ of $\bfct$ is in a good position and $\bfcz'$ is a vector
strong subtree of $\bfcz$ with $\bfcz'(0)=\bfcz(0)$, then $\bfcz'$ is also in a good position. This can be easily checked
arguing as in the proof of Lemma \ref{l37} and using the fact that the sequence $(\theta_n)$ is decreasing. The second
property is that the family of vector strong subtrees of $\bfct$ which are in a good position is \textit{dense}, i.e.
for every vector strong subtree $\bfcr$ of $\bfct$ there exists a vector strong subtree $\bfcz$ of $\bfcr$ with
$\bfcz(0)=\bfcr(0)$ such that $\bfcz$ is in a good position. This is, of course, the content of Lemma \ref{l37}.
Using these properties and a standard recursive construction, it is possible to find a vector strong subtree
$\bfcv$ of $\bfct$ such that for every $\bfv\in\otimes\bfcv$ the vector strong subtree $\suc_{\bfcv}(\bfv)$
of $\bfct$ is in a good position.

The desired vector tree $\bfcs$ will be an appropriately chosen vector strong subtree of $\bfcv$. Specifically,
let $(m_j)$ be a sequence in $\nn$ such that for every $j\in\nn$ with $j\meg 1$ we have $m_j\meg m_{j-1}+j$. We select
a vector strong subtree $\bfcs$ of $\bfcv$ such that $\otimes\bfcs(j)\subseteq\otimes\bfcv(m_j)$ for every $j\in\nn$.
We will show that $\bfcs$ is as desired. To this end, let $\bfs_0\in\otimes\bfcs$ be arbitrary and set $k=\ell_{\bfcs}(\bfs_0)$.
By the properties of $\bfcv$, the vector tree $\suc_{\bfcv}(\bfs_0)$ is in a good position. We fix a map $\phi_{\bfs_0}:
D(\bfs_0)\to \{0,...,b_W-1\}$ witnessing this fact. Let $n\in\nn$ with $k<n$ and $\bfcf\in\fan(\bfcs,n)$ with $\bfcf(0)=\bfs_0$.
Observe that $\bfs_0\in\otimes\bfcs(k)\subseteq\otimes\bfcv(m_k)$ and $\otimes\bfcf(1)\subseteq\otimes\bfcs(n)\subseteq\otimes\bfcv(m_n)$.
By the choice of the sequence $(m_j)$, there exists $l\in\nn$ with $l\meg n$ such that $\bfcf\in\fan\big(\suc_{\bfcv}(\bfs_0),l\big)$.
It follows that for every $w\in D(\bfs_0)$ if $p=\phi_{\bfs_0}(w)$, then
\[ \dens\Big( \bigcap_{\bfs\in\otimes \bfcf(1)} D(\bfs) \ \big| \ w^{\con_W}\!p \ \Big)\mik
\theta_l \stackrel{(\ref{e333})}{\mik} \theta_n.\]
The proof of Lemma \ref{l38} is completed.
\end{proof}

\subsection*{Step 5: fixing the ``direction"}

Let $\bfcs$ be the vector strong subtree of $\bfct$ obtained by Lemma \ref{l38}. For every $p\in \{0,...,b_W-1\}$
we define $C_p:\otimes\bfcs\to 2^W$ by the rule
\begin{equation} \label{e334}
C_p(\bfs)=\{w\in D(\bfs): \phi_{\bfs}(w)=p\}.
\end{equation}
\begin{lem} \label{l39}
There exist a vector strong subtree $\bfcz$ of $\bfcs$ and $p_0\in\{0,...,b_W-1\}$ such that for every $\bfz\in\otimes\bfcz$ we have
\[ \dens\big( C_{p_0}(\bfz)\big)\meg \ee/b_W. \]
\end{lem}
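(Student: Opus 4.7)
The plan is to reduce the statement to a finite coloring of $\otimes\bfcs$ and then apply the strong subtree version of the Halpern--L\"{a}uchli theorem.

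First, I would note that for every $\bfs\in\otimes\bfcs$ the map $\phi_{\bfs}:D(\bfs)\to\{0,\ldots,b_W-1\}$ produces a partition
\[ D(\bfs)=C_0(\bfs)\cup C_1(\bfs)\cup\ldots\cup C_{b_W-1}(\bfs). \]
Since $\bfcs$ is a vector strong subtree of the vector homogeneous tree $\bfct$ and $D$ is an $\ee$-dense level selection on $\bfct$, for $n=\ell_{\bfct}(\bfs)$ we have $D(\bfs)\subseteq W(l_n)$ and $|D(\bfs)|\meg \ee|W(l_n)|$. By the pigeonhole principle applied to the above partition there exists $p\in\{0,\ldots,b_W-1\}$ with $|C_p(\bfs)|\meg (\ee/b_W)|W(l_n)|$, i.e., $\dens\big(C_p(\bfs)\big)\meg \ee/b_W$.

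Next, I would define a finite coloring $\chi:\otimes\bfcs\to\{0,\ldots,b_W-1\}$ by letting $\chi(\bfs)$ be the least $p\in\{0,\ldots,b_W-1\}$ for which $\dens\big(C_p(\bfs)\big)\meg \ee/b_W$; by the previous paragraph this is well-defined. Applying Theorem \ref{t11} (the strong subtree version of the Halpern--L\"{a}uchli theorem, which holds unconditionally and therefore is available here) to the vector homogeneous tree $\bfcs$ and the coloring $\chi$, we obtain a vector strong subtree $\bfcz$ of $\bfcs$ and some $p_0\in\{0,\ldots,b_W-1\}$ such that $\chi(\bfz)=p_0$ for every $\bfz\in\otimes\bfcz$. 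This is precisely the statement $\dens\big(C_{p_0}(\bfz)\big)\meg \ee/b_W$ for every $\bfz\in\otimes\bfcz$, completing the proof.

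There is no real obstacle here: the content of the lemma is a one-step pigeonhole plus a direct appeal to Theorem \ref{t11}. The only thing to be careful about is that the pigeonhole is applied in the correct ambient tree $\bfct$ (so that the density denominator $|W(l_n)|$ matches the definition of the dense level selection), but this is automatic since $\bfcs$ is a vector strong subtree of $\bfct$ and the sets $C_p(\bfs)$ are subsets of the corresponding level $W(l_n)$ of $W$.
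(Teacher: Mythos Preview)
Your proposal is correct and follows essentially the same approach as the paper: define the coloring $\chi(\bfs)$ (the paper writes $p_{\bfs}$) as the least $p$ with $\dens\big(C_p(\bfs)\big)\meg \ee/b_W$, note this is well-defined by pigeonhole since $D(\bfs)=\bigcup_p C_p(\bfs)$ has density at least $\ee$, and then apply $\hl(d)$ (Theorem~\ref{t11}) to obtain a monochromatic vector strong subtree $\bfcz$. There is no substantive difference between your argument and the paper's.
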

\begin{proof}
Let $\bfs\in\otimes\bfcs$ be arbitrary. Let $p_{\bfs}$ be the least $p\in\{0,...,b_W-1\}$ such that $\dens\big(C_p(\bfs)\big)\meg \ee/b_W$.
Notice that, by the classical pigeonhole principle, $p_{\bfs}$ is well-defined. By $\hl(d)$, there exist a vector strong subtree $\bfcz$
of $\bfcs$ and $p_0\in\{0,..., b_W-1\}$ such that $p_{\bfz}=p_0$ for every $\bfz\in\otimes\bfcz$. It is clear that $\bfcz$ and $p_0$ are
as desired.
\end{proof}

\subsection*{Step 6: properties of $C_{p_0}$}

In this step we will not construct something new but rather summarize what we have achieved so far. Let $\bfcz$ and $p_0$ be as in Lemma
\ref{l39}. Then the following are satisfied.
\begin{enumerate}
\item[($\mathcal{P}$1)] For every $\bfz\in\otimes\bfcz$ we have $C_{p_0}(\bfz)\subseteq D(\bfz)$.
\item[($\mathcal{P}$2)] The map $C_{p_0}:\otimes\bfcz\to 2^W$ is an $(\ee/b_W)$-dense level selection.
\item[($\mathcal{P}$3)] For every $k\in\nn$ with $k\meg 1$, every $\bfcf\in\fan(\bfcz,k)$ and every $w\in C_{p_0}(\bfz_0)$, where
$\bfz_0=\bfcf(0)$, we have $\dens\big( \bigcap_{\bfz\in\otimes \bfcf(1)} D(\bfz) \ | \ w^{\con_W}\!p_0\big)\mik \theta_k$.
\end{enumerate}
Property ($\mathcal{P}$1) follows immediately by (\ref{e334}). Property ($\mathcal{P}$2) is essentially the content of Lemma \ref{l39}.
Finally, property ($\mathcal{P}$3) follows by Lemma \ref{l38} and Fact \ref{newfact}.

\subsection*{Step 7: a sequence of ``forbidden" subsets of $W$}

Let $C_{p_0}:\otimes\bfcz\to 2^W$ be the $(\ee/b_W)$-dense level selection obtained in Step 5. Let $(l_k)$ be the strictly
increasing sequence in $\nn$ such that for every $k\in\nn$ and every $\bfz\in\otimes\bfcz(k)$ we have $C_{p_0}(\bfz)\subseteq W(l_k)$.

For every $k\in\nn$ with $k\meg 1$ we define a subset $G_k$ of $W(l_k)$ by the rule
\begin{eqnarray*}
w'\in G_k & \Leftrightarrow & \text{there exist } \bfcf\in \fan(\bfcz,k) \text{ and } w\in C_{p_0}(\bfz_0),
\text{ where } \bfz_0=\bfcf(0),\\
& & \text{such that } w'\in \bigcap_{\bfz\in\otimes \bfcf(1)} D(\bfz)\cap \suc_W(w^{\con_W}\!p_0).
\end{eqnarray*}
We view the sequence $(G_k)$ as a sequence of ``forbidden" subsets of $W$. Specifically, we will modify the dense level selection
$C_{p_0}$ in such a way that the range of the new one will be disjoint from every $G_k$. But in order to do so, we will need the
following estimate on the size of each $G_k$.
\begin{lem} \label{l310}
For every $k\in\nn$ with $k\meg 1$ we have
\[ \dens(G_k)\mik \frac{\ee}{2b_W}. \]
\end{lem}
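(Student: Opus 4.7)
The plan is a direct double-counting argument in which property $(\mathcal{P}3)$ controls the contribution of each individual vector fan, and the disjointness of the cones $\suc_W(w^{\con_W}\!p_0)$ lets us sum over $w$ within a fixed fan without loss. I would fix $k\meg 1$ and, for every $\bfcf\in\fan(\bfcz,k)$, define
\[ N_{\bfcf} = \Big| \bigcup_{w\in C_{p_0}(\bfcf(0))} \Big(\bigcap_{\bfz\in\otimes\bfcf(1)} D(\bfz)\Big)\cap \suc_W(w^{\con_W}\!p_0) \Big|. \]
By the definition of $G_k$, we have $G_k\subseteq \bigcup_{\bfcf\in\fan(\bfcz,k)} \big(\text{the set inside the bars}\big)$, hence $|G_k|\mik \sum_{\bfcf\in\fan(\bfcz,k)} N_{\bfcf}$.

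Next I would bound each $N_{\bfcf}$. Write $\bfz_0=\bfcf(0)$ and let $k_0=\ell_{\bfcz}(\bfz_0)<k$, so $C_{p_0}(\bfz_0)\subseteq W(l_{k_0})$. The key observation is that as $w$ ranges over $C_{p_0}(\bfz_0)$, the nodes $w^{\con_W}\!p_0$ are pairwise distinct and lie in the common level $W(l_{k_0}+1)$, so the sets $W(l_k)\cap\suc_W(w^{\con_W}\!p_0)$ are pairwise disjoint subsets of $W(l_k)$. Therefore
\[ N_{\bfcf} \mik \sum_{w\in C_{p_0}(\bfz_0)} \Big|\Big(\bigcap_{\bfz\in\otimes\bfcf(1)} D(\bfz)\Big)\cap \suc_W(w^{\con_W}\!p_0)\Big|. \]
Applying $(\mathcal{P}3)$ to each summand (noting that $w\in C_{p_0}(\bfz_0)\subseteq D(\bfz_0)$ and that $\bigcap D(\bfz)\subseteq W(l_k)$ since $\otimes\bfcf(1)\subseteq\otimes\bfcz(k)$) gives
\[ N_{\bfcf} \mik \theta_k \sum_{w\in C_{p_0}(\bfz_0)} |W(l_k)\cap \suc_W(w^{\con_W}\!p_0)| \mik \theta_k\cdot |W(l_k)|, \]
where the last inequality uses the disjointness noted above.

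Finally, by Fact \ref{f36} we have $|\fan(\bfcz,k)|=\Theta_k$, so summing over $\bfcf$ and recalling the definition $\theta_k=\ee/(2b_W\Theta_k)$ from (\ref{e332}),
\[ |G_k|\mik \Theta_k\cdot \theta_k\cdot |W(l_k)| = \frac{\ee}{2b_W}\cdot |W(l_k)|, \]
which is exactly $\dens(G_k)\mik \ee/(2b_W)$. The only subtle point — and the reason the constant $\theta_k$ was chosen to depend on $\Theta_k$ back in Step~1 — is making sure one does not accidentally double-count across different vector fans in the initial crude bound $|G_k|\mik \sum_\bfcf N_\bfcf$; this is harmless because the inequality is one-sided and we have room to absorb the $\Theta_k$ factor in the definition of $\theta_k$. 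No further ingredient beyond $(\mathcal{P}3)$, Fact \ref{f36}, and the homogeneity of $W$ is needed.
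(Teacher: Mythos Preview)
Your proof is correct and follows essentially the same strategy as the paper: decompose $G_k$ as a union over $\bfcf\in\fan(\bfcz,k)$ of sets $H_{\bfcf}$, bound each $H_{\bfcf}$ using property $(\mathcal{P}3)$ together with the disjointness of the cones $\suc_W(w^{\con_W}\!p_0)$ for $w\in C_{p_0}(\bfz_0)$, and then sum over the $\Theta_k$ many fans via Fact~\ref{f36}. The only cosmetic difference is that the paper bounds $\dens(H_{\bfcf})$ by $\lambda\cdot\max_w\{\cdots\}$ with $\lambda=\dens\big(C_{p_0}(\bfz_0)\big)/b_W\mik 1$, whereas you sum over $w$ directly; both are equivalent ways of exploiting the same disjointness.
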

\begin{proof}
For every $\bfcf\in\fan(\bfcz,k)$ let
\[ H_{\bfcf}=\bigcap_{\bfz\in\otimes \bfcf(1)} D(\bfz)\cap \suc_W\Big( \big\{ w^{\con_W}\!p_0: w\in C_{p_0}\big(\bfcf(0)\big)\big\}\Big)\]
and notice that
\[ G_k=\bigcup_{\bfcf\in\fan(\bfcz,k)} H_{\bfcf}. \]
Since $\bfcz$ is a vector strong subtree of the vector homogeneous tree $\bfct$, by Fact \ref{f36}, we have $|\fan(\bfcz,k)|=\Theta_k$.
Therefore, it is enough to show that for every $\bfcf\in\fan(\bfcz,k)$ we have $\dens(H_{\bfcf})\mik \ee/(2b_W\Theta_k)$.

To this end, let $\bfcf\in\fan(\bfcz,k)$ be arbitrary and set $\bfz_0=\bfcf(0)$. Also let $\lambda=\dens\big(C_{p_0}(\bfz_0)\big)/b_W$
and observe that $\lambda\mik 1$. The tree $W$ is homogeneous. Hence,
\begin{eqnarray*}
\dens(H_{\bfcf}) & \mik & \lambda \cdot \max \Big\{ \dens\Big( \bigcap_{\bfz\in\otimes \bfcf(1)} D(\bfz) \ \big| \ w^{\con_W}\!p_0 \Big)
: w\in C_{p_0}(\bfz_0) \Big\}\\
& \mik & \max \Big\{ \dens\Big( \bigcap_{\bfz\in\otimes \bfcf(1)} D(\bfz) \ \big| \ w^{\con_W}\!p_0\Big) : w\in C_{p_0}(\bfz_0) \Big\}\\
& \stackrel{(\mathcal{P}3)}{\mik} & \theta_k \stackrel{(\ref{e332})}{=} \frac{\ee}{2b_W\Theta_k}.
\end{eqnarray*}
The proof of Lemma \ref{l310} is completed.
\end{proof}

\subsection*{Step 8: definition of the dense level selection $B$}

Let $C_{p_0}:\otimes\bfcz\to 2^W$ be the $(\ee/b_W)$-dense level selection obtained in Step 5.
Also let $(G_k)$ be the sequence of subsets of $W$ defined in Step 7.

We define $B:\otimes\bfcz\to 2^W$ as follows. First we set $B\big(\bfcz(0)\big)=C_{p_0}\big(\bfcz(0)\big)$.
If $\bfz\in\otimes\bfcz(k)$ for some $k\in\nn$ with $k\meg 1$, then we set
\begin{equation} \label{e336}
B(\bfz)=C_{p_0}(\bfz)\setminus G_k.
\end{equation}
We summarize, below, the main properties of the map $B$.
\begin{enumerate}
\item[($\mathcal{P}$4)] For every $\bfz\in\otimes\bfcz$ we have $B(\bfz)\subseteq C_{p_0}(\bfz)\subseteq D(\bfz)$.
\item[($\mathcal{P}$5)] The map $B:\otimes\bfcz\to 2^W$ is an $(\ee/2b_W)$-dense level selection.
\item[($\mathcal{P}$6)] For every $\bfcf\in\fan(\bfcz)$ and every $w\in B(\bfz_0)$, where $\bfz_0=\bfcf(0)$, we have
\[ \bigcap_{\bfz\in\otimes\bfcf(1)} B(\bfz)\cap \suc_W(w^{\con_W}\!p_0)=\varnothing.\]
\end{enumerate}
Property ($\mathcal{P}$4) follows by property ($\mathcal{P}$1) isolated in Step 6 and (\ref{e336}). Property ($\mathcal{P}$5) follows
by property ($\mathcal{P}$2) and Lemma \ref{l310}. To see that property ($\mathcal{P}$6) is satisfied, let $\bfcf\in\fan(\bfcz)$ and
$w\in B(\bfz_0)$, where $\bfz_0=\bfcf(0)$. We set
\[ A=\bigcap_{\bfz\in\otimes\bfcf(1)} B(\bfz)\cap \suc_W(w^{\con_W}\!p_0).\]
Let $k$ be the unique integer such that $\bfcf\in\fan(\bfcz,k)$. By property ($\mathcal{P}$4) and the definition of $G_k$,
we see that $A\subseteq G_k$. We fix $\bfz'\in\otimes\bfcf(1)$ and we notice that $A\subseteq B(\bfz')$. Since $\bfz'\in\otimes\bfcz(k)$,
by the previous inclusions and the definition of the dense level selection $B$, we conclude that $A\subseteq G_k\cap B(\bfz')=\varnothing$,
as desired.

\subsection*{Step 9: getting the contradiction}

We are finally in a position to derive the contradiction. Indeed, by property ($\mathcal{P}$5), the map $B:\otimes\bfcz\to 2^W$
is an $(\ee/2b_W)$-dense level selection. Moreover, by our assumptions, we have that $\dhl(d)$ holds. Therefore, by Proposition
\ref{p35} applied to ``$B:\otimes\bfcz\to 2^W$" and ``$p_0$", there must exist $\bfcf\in\fan(\bfcz)$ and $w\in B(\bfz_0)$,
where $\bfz_0=\bfcf(0)$, such that
\[ \bigcap_{\bfz\in\otimes\bfcf(1)} B(\bfz)\cap \suc_W(w^{\con_W}\!p_0)\neq\varnothing. \]
This contradicts property ($\mathcal{P}$6). The proof of Theorem \ref{t33} is thus completed.

\subsection{Comments}

We recall that a vector tree $\bfct=(T_1,...,T_d)$ is said to be \textit{boundedly branching} if for every $i\in\{1,...,d\}$ there exists
$b_i\in\nn$ with $b_i\meg 1$ such that every $t\in T_i$ has at most $b_i$ immediate successors. By Theorem \ref{t11}, for every vector
boundedly branching, pruned tree $\bfct=(T_1,...,T_d)$ there exist a vector strong subtree $\bfcs=(S_1,...,S_d)$ of $\bfct$ and $b_1,...,b_d\in\nn$,
with $b_i\meg 1$ for every $i\in\{1,...,d\}$, such that every $s\in S_i$ has exactly $b_i$ immediate successors in $S_i$ for every $i\in\{1,...,d\}$.
Therefore, Theorem \ref{t33} is also valid for nontrivial vector boundedly branching trees simply by reducing the general case to the case of
vector homogeneous trees.

The next natural class of vector trees for which Theorem \ref{t33} could be possibly true is that of vector \textit{quasi-homogeneous}
trees: a vector tree $\bfct=(T_1,...,T_d)$ is said to be quasi-homogeneous if for every $i\in\{1,...,d\}$ the number of immediate successors
of a node in $T_i$ depends only on its length. We point out that all arguments in this section (as well as the recursive construction
presented in \S 5) can be easily adapted to treat vector quasi-homogeneous trees \textit{except} Fact \ref{f36}. Indeed, by Fact \ref{f36},
we have an a priori estimate for the cardinality of the set $\fan(\bfcs,k)$ for every $k\in\nn$ and every vector strong subtree $\bfcs$
of a vector homogeneous tree $\bfct$. If the vector tree $\bfct$ is quasi-homogeneous but not boundedly branching, then no estimate can be
obtained. As is shown in \cite[Theorem 2.5]{BV} this obstacle is a necessity rather than a coincidence.

Finally we remark that, using essentially the same arguments as in the proof of Theorem \ref{t33}, one can show that there
exist a vector strong subtree $\bfcr$ of $\bfct$ and a constant $0<\theta\mik 1$ such that for ``almost all" nodes $w$
in $D\big(\bfcr(0)\big)$ the pair $(\bfcr,w)$ is strongly $\theta$-correlated with respect to $D$. Precisely, we have the following.
\begin{thm} \label{newthm}
Let $d\meg 1$ and assume that $\mathrm{DHL}(d)$ holds. Also let $\bfct=(T_1,...,T_d)$ be a vector homogeneous tree,
$W$ a homogeneous tree, $0<\ee\mik 1$ and $D:\otimes\bfct\to 2^W$ an $\ee$-dense level selection. Then for every
$0<\delta<\ee$ there exist a vector strong subtree $\bfcr$ of $\bfct$ and a constant $0<\theta\mik 1$ such that,
setting $\bfr_0=\bfcr(0)$ and
\[ G=\big\{ w\in D(\bfr_0): \text{ the pair } (\bfcr,w) \text{ is strongly $\theta$-correlated with respect to $D$} \big\},\]
we have $\dens(G)\meg \dens\big(D(\bfr_0)\big)-\delta$.
\end{thm}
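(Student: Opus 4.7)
The plan is to rerun the contradiction argument of Theorem \ref{t33} with a $\delta$-relaxed family $\{\mathcal{F}_n^\delta:n\meg 1\}$ of ``bad'' fans, where we only require a density-$\delta$ subset of $D\big(\bfcf(0)\big)$ to witness the bad behaviour of $\bfcf$, rather than the whole of $D\big(\bfcf(0)\big)$. Fix $0<\delta<\ee$ and assume, towards a contradiction, that for every vector strong subtree $\bfcr$ of $\bfct$ and every $0<\theta\mik 1$, setting $\bfr_0=\bfcr(0)$, the set
\[
G(\bfcr,\theta)=\big\{w\in D(\bfr_0) : (\bfcr,w) \text{ is strongly $\theta$-correlated with respect to } D\big\}
\]
satisfies $\dens\big(G(\bfcr,\theta)\big)<\dens\big(D(\bfr_0)\big)-\delta$.

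Let $\Theta_n=|\fan(\bfct,n)|$ as in Step 1 of \S 4.3, set $\theta_n=\delta/(2b_W\Theta_n)$, and for every $n\meg 1$ declare $\bfcf\in\mathcal{F}_n^\delta$ if and only if the set
\[
\mathrm{Bad}_n(\bfcf)=\Big\{w\in D\big(\bfcf(0)\big) : \exists\, p\in\{0,\ldots,b_W-1\},\ \dens\Big(\bigcap_{\bft\in\otimes\bfcf(1)} D(\bft)\ \Big|\ w^{\con_W}\!p\Big)\mik\theta_n\Big\}
\]
has density at least $\delta$. The inclusion $\mathcal{F}_{n+1}^\delta\subseteq\mathcal{F}_n^\delta$ is immediate from $\theta_{n+1}\mik\theta_n$.

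The main obstacle, and the only genuinely new step, is the analog of Lemma \ref{l31newstaff}(b): under the assumption above, for every vector strong subtree $\bfcs$ of $\bfct$ and every $n\meg 1$ there exists $\bfcf\in\fan(\bfcs)\cap\mathcal{F}_n^\delta$ with $\bfcf(0)=\bfcs(0)$. Suppose not. Then for every such $\bfcf$ the complement $D\big(\bfcs(0)\big)\setminus\mathrm{Bad}_n(\bfcf)$ is a subset of the finite set $D\big(\bfcs(0)\big)$ of density strictly greater than $\dens\big(D(\bfcs(0))\big)-\delta$. Colouring each such $\bfcf$ by this complement yields a finite colouring of $\{\bfcf\in\fan(\bfcs):\bfcf(0)=\bfcs(0)\}$; Proposition \ref{pns3} supplies a vector strong subtree $\bfcr$ of $\bfcs$ with $\bfcr(0)=\bfcs(0)$ on which the colour is constant, equal to some $H\subseteq D\big(\bfcs(0)\big)$ with $\dens(H)>\dens\big(D(\bfcs(0))\big)-\delta$. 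By construction every $w\in H$ makes $(\bfcr,w)$ strongly $\theta_n$-correlated with respect to $D$, so $H\subseteq G(\bfcr,\theta_n)$, contradicting the assumed bound on $\dens\big(G(\bfcr,\theta_n)\big)$.

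With this analog secured, Proposition \ref{pns4} applies to $\{\mathcal{F}_n^\delta:n\meg 1\}$ and Steps 3--9 of the proof of Theorem \ref{t33} carry over with routine bookkeeping: the witnessing maps produced by the analogs of Lemmas \ref{l37} and \ref{l38} are now only defined on subsets $D'\subseteq D(\bfr_0)$, respectively $D'_{\bfs_0}\subseteq D(\bfs_0)$, of density $\meg\delta$ (a joint Ramsey colouring of each $\bfcf$ by the pair $\big(\mathrm{Bad}_k(\bfcf),\phi^k_\bfcf\big)$ produces a uniform witness, since the colour space is finite); pigeonholing as in Lemma \ref{l39} yields $\dens\big(C_{p_0}(\bfs)\big)\meg\delta/b_W$; the estimate in Lemma \ref{l310} becomes $\dens(G_k)\mik\delta/(2b_W)$ thanks to our rescaled $\theta_n$; so $B:\otimes\bfcz\to 2^W$ is a $\big(\delta/(2b_W)\big)$-dense level selection satisfying the analog of property ($\mathcal{P}$6); and Step 9 invokes Proposition \ref{p35} on $B$ with $p=p_0$ to derive the contradiction. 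The substantive new ingredient is the Ramsey step above, where colouring by a power-set-valued function extracts many good $w$'s simultaneously; the rest is careful tracking of constants.
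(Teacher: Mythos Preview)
Your proposal is correct and follows precisely the route the paper indicates: the paper does not give a detailed proof of Theorem~\ref{newthm} but merely remarks that ``using essentially the same arguments as in the proof of Theorem~\ref{t33}'' one obtains the stronger conclusion, and your $\delta$-relaxed family $\{\mathcal{F}_n^\delta\}$ together with the power-set-valued colouring in the analog of Lemma~\ref{l31newstaff}(b) is exactly the natural way to implement that adaptation. The remaining steps (Lemmas~\ref{l37}--\ref{l310} and Steps~5--9) carry over as you describe, with $\ee$ replaced by $\delta$ in the choice of $\theta_n$ and in the resulting density bounds.
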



\section{Proof of Theorem \ref{t12}}

As we have already mentioned in the introduction, the proof of Theorem \ref{t12} proceeds by induction. The case ``$d=1$"
is the content of \cite[Theorem 2.3]{BV}. So, assume that we have proven $\dhl(d)$ for some $d\in\nn$ with $d\meg 1$ and that
we are given a vector homogeneous tree $(T_1,...,T_d,W)$, a constant $0<\ee\mik 1$, a subset $D$ of the level product of
$(T_1,...,T_d,W)$ and an infinite subset $L$ of $\nn$ such that
\begin{equation} \label{e41}
|D\cap \big(T_1(n)\times ... \times T_d(n)\times W(n)\big)|\meg \ee |\big(T_1(n)\times ... \times T_d(n)\times W(n)\big)|
\end{equation}
for every $n\in L$. Our goal is to find a vector strong subtree $(Z_1,..., Z_d,V)$ of $(T_1,..., T_d,W)$ whose level product is
contained in $D$. This will be done in several steps. We set $\bfct=(T_1,...,T_d)$. For notational simplicity, for every
$i\in\{1,...,d\}$ by $b_i$ we shall denote the branching number of the homogeneous tree $T_i$. The vector homogeneous tree
$\bfcb_{\bfct}=(b_1^{<\nn},...,b_d^{<\nn})$ will be denoted simply by $\bfcb$. Notice that if $\bfp\in\otimes\bfcb(1)$, then $\bfp$
is a finite sequence $(p_1,...,p_d)$ with $p_i\in \{0,...,b_i-1\}$ for every $i\in\{1,...,d\}$. By $\mhden$ we shall denote the unique
finite sequence in $\otimes\bfcb(1)$ having zero entries. For every $n\in\nn$, every $\bfu=(u_1,...,u_d)\in\otimes\bfcb(n)$ and every
$\bfp=(p_1,...,p_d)\in\otimes\bfcb(1)$ we set $\bfu^{\con}\bfp=(u_1^{\con}p_1,...,u_d^{\con}p_d)\in\otimes\bfcb(n+1)$.

\subsection*{Step 1: obtaining a dense level selection}

For every $n\in L$ let $C_n$ be the subset of $\otimes\bfct(n)$ defined by the rule
\[ \bft\in C_n \Leftrightarrow |\{w\in W(n): (\bft,w)\in D\}|\meg \ee/2 |W(n)|.\]
Using (\ref{e41}) and arguing as in the proof of Claim \ref{fubini}, we get the following.
\begin{fact} \label{f41}
For every $n\in L$ we have $|C_n|\meg \ee/2 |\otimes \bfct(n)|$.
\end{fact}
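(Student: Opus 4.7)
The plan is to run the Fubini-type double-counting argument used in the proof of Claim \ref{fubini}, only transposed: there the density of the fibers was given and the density of the set of ``thick'' fibers was extracted; here we are given the density of the whole level slice of $D$ and we extract the density of its ``thick'' first coordinates.

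Specifically, fix $n\in L$ and set
\[
E_n = D\cap \bigl( T_1(n)\times\cdots\times T_d(n)\times W(n)\bigr).
\]
Hypothesis \eqref{e41} is the lower bound $|E_n|\meg \ee\cdot |\otimes\bfct(n)|\cdot |W(n)|$. For the matching upper bound I would slice $E_n$ over its first coordinate. For each $\bft\in\otimes\bfct(n)$ let $N(\bft)=|\{w\in W(n):(\bft,w)\in D\}|$. By the definition of $C_n$ one has $N(\bft)< (\ee/2)|W(n)|$ whenever $\bft\notin C_n$, while the trivial bound $N(\bft)\mik |W(n)|$ always holds. Summing over $\bft\in\otimes\bfct(n)$ yields
\[
|E_n|=\sum_{\bft\in\otimes\bfct(n)} N(\bft)\mik |C_n|\cdot |W(n)|+(\ee/2)\cdot |\otimes\bfct(n)|\cdot |W(n)|.
\]
Combining the two displays, cancelling the common factor $|W(n)|$ and rearranging yields $|C_n|\meg (\ee/2)\,|\otimes\bfct(n)|$, as claimed.

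There is no real obstacle here: this is a first-moment / Markov-type estimate that already appears (up to notational conventions) in Claim \ref{fubini} above, which is exactly why the authors say the fact follows ``arguing as in the proof of Claim \ref{fubini}''. The only small point to keep in mind is that the roles of the two factors are swapped relative to the earlier claim --- the ``bad'' coordinates now live in $\otimes\bfct(n)$, while ``thickness'' is measured in the $W$-direction.
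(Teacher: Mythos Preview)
Your proof is correct and is exactly the argument the paper intends: the authors do not write out a separate proof of Fact~\ref{f41} but simply refer back to Claim~\ref{fubini}, and your double-counting of $E_n$ is that argument with the two factors swapped, as you note.
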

We set $C=\bigcup_{n\in L} C_n$. By Fact \ref{f41}, we see that
\[ \limsup_{n\to\infty} \frac{|C\cap \otimes\bfct(n)|}{|\otimes\bfct(n)|} =
\limsup_{n\in L} \frac{|C_n\cap \otimes\bfct(n)|}{|\otimes\bfct(n)|}\meg \ee/2 > 0.\]
Since $\dhl(d)$ holds, there exists a vector strong subtree $\bfcs$ of $\bfct$ such that $\otimes\bfcs\subseteq C$.
It follows that the section map
\[ \otimes\bfcs \ni \bfs\mapsto \{w\in W: (\bfs,w)\in D\}\in 2^W \]
is an $(\ee/2)$-dense level selection. It will be denoted by $D:\otimes\bfcs\to 2^W$.

\subsection*{Step 2: defining certain vector fans}

Let $\bfcs=(S_1,...,S_d)$ be the vector strong subtree of $\bfct$ obtained in Step 1. Also let $\bfcr$ be an arbitrary vector strong
subtree of $\bfcs$. In this step, we will introduce a method to obtain vector fans of $\bfcr$ from certain elements of $\otimes\bfcr$.
The method is based on the notion of a vector canonical isomorphism described in \S 2.5. The resulting vector fans will be used in the
next step.

We will describe, first, the one-dimensional case in abstract setting. So, let $Z$ be a homogeneous tree and set $z_0=Z(0)$. 
For every $p\in\{0,...,b_Z-1\}$ let
\begin{equation} \label{e42}
Z[p]= \suc_{Z}( z_0^{\con_Z}\!p).
\end{equation}
It is clear that $Z[p]$ is a strong subtree of $Z$, and so, it is homogeneous with branching number $b_Z$. This observation
permits us to consider the canonical isomorphism $\ci\big(Z[0],Z[p]\big)$ between $Z[0]$ and $Z[p]$ for every $p\in\{0,...,b_Z-1\}$.
Now, for every $z\in Z[0]$ we set
\begin{equation} \label{e43}
F_{z,Z}=\{z_0\}\cup \Big\{ \ci\big(Z[0],Z[p]\big)(z): p\in\{0,...,b_Z-1\}\Big\}.
\end{equation}
Notice that $F_{z,Z}\in\fan(Z)$. The fan $F_{z,Z}$ will be called a \textit{$(z,Z)$-directed fan}. We point that not
every fan of $Z$ is $(z,Z)$-directed for some $z\in Z[0]$. Actually, the set of all $(z,Z)$-directed fans is a
rather ``thin" subset of $\fan(Z)$.

After this preliminary discussion, we are ready to introduce the vector fans we mentioned above. Specifically, let $\bfcr=(R_1,...,R_d)$
be an arbitrary vector strong subtree of $\bfcs$. For every $\bfp=(p_1,...,p_d)\in \otimes\bfcb(1)$ we set
\begin{equation} \label{e44}
\bfcr[\bfp]=\big( R_1[p_1], ..., R_d[p_d]\big)
\end{equation}
and we notice that $\bfcr[\bfp]$ is a vector strong subtree of $\bfcr$. Again we emphasize that this observation permits us
to consider that vector canonical isomorphism $\bfci\big(\bfcr[\mhden],\bfcr[\bfp]\big)$ between $\bfcr[\mhden]$ and $\bfcr[\bfp]$
for every $\bfp\in\otimes\bfcb(1)$. Observe that $\bfci\big(\bfcr[\mhden],\bfcr[\mhden]\big)$ is the identity map on $\bfcr[\mhden]$.
For every $\bfr=(r_1,...,r_d)\in\otimes\bfcr[\mhden]$ we define
\begin{equation} \label{e45}
\bfcf_{\bfr,\bfcr}=(F_{r_1,R_1}, ..., F_{r_d,R_d}).
\end{equation}
Notice that $\bfcf_{\bfr,\bfcr}$ is well-defined since $r_i\in R_i[0]$ for every $i\in\{1,...,d\}$. Also observe that
$\bfcf_{\bfr,\bfcr}\in\fan(\bfcr)$ and $\bfcf_{\bfr,\bfcr}(0)=\bfcr(0)$. The vector fan $F_{\bfr,\bfcr}$ will be called
an \textit{$(\bfr,\bfcr)$-directed vector fan}. We isolate, for future use, the following representation of the set
$\otimes\bfcf_{\bfr,\bfcr}(1)$. It is a direct consequence of the relevant definitions.
\begin{fact} \label{f42}
For every vector strong subtree $\bfcr$ of $\bfcs$ and every $\bfr\in\bfcr[\mhden]$ we have
\[ \otimes\bfcf_{\bfr,\bfcr}(1)=\Big\{ \bfci\big(\bfcr[\mhden],\bfcr[\bfp]\big)(\bfr): \bfp\in\otimes\bfcb(1)\Big\}. \]
In particular, we have $\bfr\in \otimes\bfcf_{\bfr,\bfcr}(1)$.
\end{fact}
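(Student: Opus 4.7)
The plan is to prove Fact \ref{f42} by simply unfolding the two relevant definitions, namely the definition of an $(\bfr,\bfcr)$-directed vector fan given by (\ref{e45}) and the definition of a vector canonical isomorphism given by (\ref{e251}), and checking that they produce the same set. There is no substantive obstacle; the entire content is a bookkeeping exercise. The only thing one has to be careful about is that the vector canonical isomorphism $\bfci(\bfcr[\mhden], \bfcr[\bfp])$ is well-defined on $\bfr$, which is ensured by the assumption $\bfr \in \otimes\bfcr[\mhden]$: the coordinates $r_1,\ldots,r_d$ lie on a common level of $R_1[0],\ldots,R_d[0]$, so the coordinate-wise images under $\ci(R_i[0], R_i[p_i])$ also lie on a common level, meeting the compatibility requirement in the definition of a vector canonical embedding.

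Concretely, I would first compute the left-hand side. By (\ref{e45}) we have $\bfcf_{\bfr,\bfcr}=(F_{r_1,R_1},\ldots,F_{r_d,R_d})$, and by (\ref{e43}) the $1$-level of each component is
\[ F_{r_i,R_i}(1) = \bigl\{ \ci(R_i[0], R_i[p])(r_i) : p \in \{0,\ldots,b_{R_i}-1\}\bigr\}, \]
where I use that the canonical isomorphism preserves levels (condition (a) in the definition of canonical embedding) to identify the images as precisely the $1$-level elements of $F_{r_i,R_i}$. Taking the Cartesian product over $i\in\{1,\ldots,d\}$ gives
\[ \otimes\bfcf_{\bfr,\bfcr}(1) = \prod_{i=1}^d \bigl\{ \ci(R_i[0], R_i[p_i])(r_i) : p_i \in \{0,\ldots,b_{R_i}-1\}\bigr\}. \]
On the other hand, by (\ref{e251}) applied to the canonical embeddings $\ci(R_i[0],R_i[p_i])$, we have, for each $\bfp=(p_1,\ldots,p_d)\in\otimes\bfcb(1)$,
\[ \bfci(\bfcr[\mhden], \bfcr[\bfp])(\bfr) = \bigl( \ci(R_1[0], R_1[p_1])(r_1),\ldots, \ci(R_d[0], R_d[p_d])(r_d)\bigr). \]
As $\bfp$ ranges over $\otimes\bfcb(1)$, the tuple $(p_1,\ldots,p_d)$ ranges over $\prod_{i=1}^d\{0,\ldots,b_{R_i}-1\}$, so the right-hand side above ranges exactly over the product in the previous display. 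This establishes the first assertion.

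For the ``in particular'' clause, I would take $\bfp=\mhden$. Since $\bfci(\bfcr[\mhden],\bfcr[\mhden])$ is the identity map on $\otimes\bfcr[\mhden]$ (as noted in the sentence preceding (\ref{e45})), we get $\bfci(\bfcr[\mhden],\bfcr[\mhden])(\bfr)=\bfr$, so $\bfr$ belongs to the set on the right-hand side and hence, by the equality just proved, also to $\otimes\bfcf_{\bfr,\bfcr}(1)$.
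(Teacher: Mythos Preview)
Your proposal is correct and is exactly the routine unfolding of definitions that the paper has in mind; the paper itself simply states that Fact \ref{f42} ``is a direct consequence of the relevant definitions'' and gives no further argument. Your verification that the Cartesian product of the sets $F_{r_i,R_i}(1)$ coincides with the range of $\bfp\mapsto\bfci(\bfcr[\mhden],\bfcr[\bfp])(\bfr)$, together with the observation that $\bfp=\mhden$ yields the identity, is precisely what is needed.
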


\subsection*{Step 3: a recursive construction}

This is the main step of the proof. Let $D:\otimes\bfcs\to 2^W$ be the $(\ee/2)$-dense level selection obtained in Step 1.
Recursively, we shall construct
\begin{enumerate}
\item[(a)] two sequences $(\bfcs_n)$ and $(\bfcr_n)$ of vector strong subtrees of $\bfcs$,
\item[(b)] two sequences $(\ee_n)$ and $(\theta_n)$ of reals in $(0,1]$,
\item[(c)] a strictly increasing sequence $(l_n)$ in $\nn$,
\item[(d)] for every $n\in\nn$ a map $D_n:\otimes\bfcs_n\to 2^W$ and
\item[(e)] a family $\{w_v:v\in b_W^{<\nn}\}$ in $W$
\end{enumerate}
such that for every $n\in\nn$ the following conditions are satisfied.
\begin{enumerate}
\item[(C1)] $\bfcr_n$ is a vector strong subtree of $\bfcs_n$.
\item[(C2)] $\bfcs_{n+1}=\bfcr_n[\mhden]$.
\item[(C3)] For every $v\in b_W^n$ we have $\ell_W(w_v)=l_n$.
\item[(C4)] For every $v\in b_W^n$ and $p\in\{0,...,b_W-1\}$ we have $w_{v^{\con}\!p}\in \suc_W(w_v^{\con_W}\!p)$.
\item[(C5)] The map $D_n:\otimes\bfcs_n\to 2^W$ is an $\ee_n$-dense level selection.
\item[(C6)] For every $\bfs\in\otimes\bfcs_n$ we have $D_n(\bfs)\subseteq D_0(\bfs)=D(\bfs)$.
\item[(C7)] For every $v\in b_W^n$ the pair $(\bfcr_n,w_v)$ is strongly $\theta_n$-correlated with respect to the dense
level selection $D_n$.
\item[(C8)] For every $\bfr\in\otimes\bfcr_n[\mhden]$ we have
\begin{equation} \label{e46}
D_{n+1}(\bfr)=\bigcap_{\bfp\in\otimes\bfcb(1)} D_n\Big( \bfci\big(\bfcr_n[\mhden],\bfcr_n[\bfp]\big)(\bfr)\Big).
\end{equation}
\end{enumerate}
We proceed to the construction. For $n=0$ we set ``$\bfcs_0=\bfcs$", ``$\ee_0=\ee/2$" and ``$D_0=D$" and we notice that with
these choices conditions (C5) and (C6) are satisfied. Recall that we have already proven $\dhl(d)$. Therefore, by Theorem
\ref{t33} applied to the $\ee_0$-dense level selection $D_0:\otimes\bfcs_0\to 2^W$, there exist a vector strong subtree $\bfcr$
of $\bfcs_0$, a node $w\in W$ and a constant $0<\theta\mik 1$ such that the pair $(\bfcr,w)$ is strongly $\theta$-correlated with
respect to $D_0$. We set ``$\bfcr_0=\bfcr$", ``$\theta_0=\theta$", ``$w_{\varnothing}=w$" and ``$l_0=\ell_W(w)$" and we observe that
with these choices conditions (C1), (C3) and (C7) are satisfied. Since conditions (C2), (C4) and (C8) are meaningless for $n=0$,
the first step of the recursive construction is completed.

Let $n\in\nn$ and assume that the construction has been carried out up to $n$. We set ``$\bfcs_{n+1}=\bfcr_n[\mhden]$"
and we notice that condition (C2) is satisfied. Let $\bfr\in\otimes\bfcs_{n+1}$ be arbitrary and consider the
$(\bfr,\bfcr_n)$-directed fan $\bfcf_{\bfr,\bfcr_n}$ described in Step 2. We define $D_{n+1}:\otimes\bfcs_{n+1}\to 2^W$
by the rule
\[ D_{n+1}(\bfr)=\bigcap_{\bfs\in\otimes\bfcf_{\bfr,\bfcr_n}(1)} D_n(\bfs). \]
By Fact \ref{f42} and our inductive assumptions, we see that conditions (C6) and (C8) are satisfied. We set
``$\ee_{n+1}=\theta_n b_W^{n-l_n}$" and we claim that with this choice condition (C5) is satisfied.
To this end, it is enough to show that $\dens\big(D_{n+1}(\bfr)\big)\meg \ee_{n+1}$ for every $\bfr\in\otimes\bfcs_{n+1}$.
So, let $\bfr\in\otimes\bfcs_{n+1}$ be arbitrary. By our inductive assumptions, the pair $(\bfcr_n,w_v)$ is strongly
$\theta_n$-correlated with respect to $D_n$ for every $v\in b_W^n$. Recall that $\ell_W(w_v)=l_n$ and
$\bfcf_{\bfr,\bfcr_n}\in\fan(\bfcr_n)$. Since the tree $W$ is homogeneous we get that
\begin{eqnarray*}
\dens\big(D_{n+1}(\bfr)\big) & \meg & \frac{1}{b_W^{l_n+1}} \sum_{v\in b_W^n} \sum_{p=0}^{b_W-1}
\dens\big( D_{n+1}(\bfr) \ | \ w_v^{\con_W}\!p \big) \\
& = & \frac{1}{b_W^{l_n+1}} \sum_{v\in b_W^n} \sum_{p=0}^{b_W-1}
\dens\Big( \bigcap_{\bfs\in\otimes\bfcf_{\bfr,\bfcr_n}(1)} D_n(\bfs) \ \big| \ w_v^{\con_W}\!p \ \Big)\\
& \meg & \frac{b_W^{n+1}}{b_W^{l_n+1}} \cdot \theta_n = \ee_{n+1}.
\end{eqnarray*}
This shows that $D_{n+1}$ is an $\ee_{n+1}$-dense level selection.

Now for every $v\in b_W^n$ and every $p\in\{0,...,b_W-1\}$ we define $D_{v,p}:\otimes\bfcs_{n+1}\to 2^W$ by the rule
$D_{v,p}(\bfr)=D_{n+1}(\bfr)\cap \suc_W(w_v^{\con_W}\!p)$. Arguing as above, it is easy to check that $D_{v,p}$ is a
$\delta_n$-dense level selection where $\delta_n=\theta_n/b_W^{l_n+1}$. Again we emphasize that we have already proven
$\dhl(d)$. Therefore, by repeated applications of Corollary \ref{c34}, we may find a vector strong subtree $\bfcr$ of
$\bfcs_{n+1}$ and for every $v\in b_W^n$ and every $p\in\{0,...,b_W-1\}$ a node $w_{v,p}\in W$ and a constant
$0<\theta_{v,p}\mik 1$ such that the pair $(\bfcr,w_{v,p})$ is strongly $\theta_{v,p}$-correlated with respect
to $D_{v,p}$. We set ``$\bfcr_{n+1}=\bfcr$", ``$\theta_{n+1}=\min\big\{\theta_{v,p}:v\in b_W^n \text{ and }
p\in\{0,...,b_W-1\}\big\}$" and ``$w_{v^{\con}\!p}=w_{v,p}$" for every $v\in b_W^n$ and every $p\in\{0,...,b_W-1\}$.
Notice that with these choices conditions (C1), (C4) and (C7) are satisfied. Let $\bfr_{n+1}$ be the root of $\bfcr_{n+1}$.
Since the pair $(\bfcr_{n+1},w_{v^{\con}\!p})$ is strongly $\theta_{n+1}$-correlated with respect to $D_{v,p}$ we see
that $w_{v^{\con}\!p}\in D_{v,p}(\bfr_{n+1}) \subseteq D_{n+1}(\bfr_{n+1})\subseteq D(\bfr_{n+1})$. Hence, there exists $l\in\nn$
with $l>l_n$ such that $\ell_W(w_{v^{\con}\!p})=l$ for every $v\in b_W^n$ and every $p\in\{0,...,b_W-1\}$. We set
``$l_{n+1}=l$" and we observe that the last condition, condition (C3), is also satisfied. The recursive construction
is completed.

\subsection*{Step 4: a family of vector canonical embeddings}

Let $(\bfcs_n)$ and $(\bfcr_n)$ be the sequences of vector strong subtrees of $\bfcs$ obtained in Step 3. Also let $(D_n)$
be the corresponding sequence of dense level selections. Recall that $\bfcs_{n+1}=\bfcr_n[\mhden]$. For every $n\in\nn$
we write $\bfcs_n=(S^n_1,...,S^n_d)$ and $\bfcr_n=(R^n_1,...,R^n_d)$. Our goal in this step is to define a family
$\{H_{\bfu}:\bfu\in\otimes\bfcb\}$ of vector canonical embeddings. It will be used to ``unravel" the recursive
definition of the sequence $(D_n)$ and relate each $D_n$ with the $(\ee/2)$-dense level selection $D:\otimes\bfcs\to 2^W$
obtained in Step 1. This is the content of Fact \ref{f43} below.

To this end, we will describe first how these embeddings are acting in each coordinate. So, fix $i\in\{1,...,d\}$. Recursively,
for every $n\in\nn$ and every $u\in b_i^n$ we define a map $h^u_i:S^n_i\to T_i$ as follows. For $u=\varnothing$ let
$h^{\varnothing}_i:S^0_i\to T_i$ be the identity. Let $n\in\nn$ and $u\in b_i^n$ and assume that the
map $h^u_i:S^n_i\to T_i$ has been defined. For every $p\in\{0,...,b_i-1\}$ we set
\begin{equation} \label{e47}
h^{u^{\con}\!p}_i= h^u_i \circ \ci\big(S^{n+1}_i,R^n_i[p]\big)=h^u_i \circ \ci\big(R^n_i[0],R^n_i[p]\big)
\end{equation}
where $\ci\big(S^{n+1}_i,R^n_i[p]\big)$ is the canonical isomorphism between the homogeneous trees $S^{n+1}_i$ and $R^n_i[p]$.
Inductively, it is easy to verify the following properties guaranteed by the above construction.
\begin{enumerate}
\item[(P1)] For every $u\in b_i^{<\nn}$ the map $h^u_i$ is a canonical embedding.
\item[(P2)] For every $n\in\nn$, every $u\in b_i^n$ and every $s\in S^n_i$ we have $\ell_{T_i}\big(h^u_i(s)\big)=\ell_{T_i}(s)$.
\end{enumerate}

We are ready to introduce the desired family $\{H_\bfu:\bfu\in\otimes\bfcb\}$ of vector canonical embeddings.
So, let $n\in\nn$ and $\bfu=(u_1,...,u_d)\in\otimes\bfcb(n)$ be arbitrary. We define $H_{\bfu}:\otimes\bfcs_n\to\otimes\bfct$
by the rule
\begin{equation} \label{e48}
H_\bfu\big((s_1,...,s_d)\big)=\big( h^{u_1}_1(s_1), ..., h^{u_d}_d(s_d)\big).
\end{equation}
By properties (P1) and (P2), it is clear that $H_\bfu$ is a well-defined vector canonical embedding. We will need a formula
satisfied by these maps which follows by identities (\ref{e44}), (\ref{e47}) and (\ref{e48}). Specifically, for every $n\in\nn$,
every $\bfu=(u_1,...,u_d)\in\otimes\bfcb(n)$ and every $\bfp=(p_1,...,p_d)\in\otimes\bfcb(1)$ it holds that
\begin{equation} \label{e49}
H_{\bfu^{\con}\bfp}= H_\bfu \circ \bfci\big(\bfcr_n[\mhden],\bfcr_n[\bfp]\big)
\end{equation}
where $\bfci\big(\bfcr_n[\mhden],\bfcr_n[\bfp]\big)$ is the vector canonical isomorphism between $\bfcr_n[\mhden]$ and
$\bfcr_n[\bfp]$. We will also need the following.
\begin{fact} \label{f43}
For every $n\in\nn$ and every $\bfs\in\otimes\bfcs_n$ we have
\[ D_n(\bfs)=\bigcap_{\bfu\in\otimes\bfcb(n)} D\big( H_\bfu(\bfs)\big). \]
\end{fact}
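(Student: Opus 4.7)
My plan is to prove Fact~\ref{f43} by induction on $n$, exploiting nothing more than the recursive identity (C8), the composition formula~(\ref{e49}), and the natural bijection between $\otimes\bfcb(n)\times \otimes\bfcb(1)$ and $\otimes\bfcb(n+1)$ via concatenation.

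For the base case $n=0$ we have $\bfcs_0=\bfcs$, $D_0=D$, and $\otimes\bfcb(0)$ is the singleton containing the empty tuple $\varnothing$, which by the construction in Step~4 indexes the identity map $H_\varnothing$. Hence both sides equal $D(\bfs)$, and the identity holds trivially.

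For the inductive step, assume the identity at level $n$ and fix $\bfr\in\otimes\bfcs_{n+1}=\otimes\bfcr_n[\mhden]$. By condition (C8),
\[
D_{n+1}(\bfr) = \bigcap_{\bfp\in\otimes\bfcb(1)} D_n\Big(\bfci\big(\bfcr_n[\mhden],\bfcr_n[\bfp]\big)(\bfr)\Big).
\]
For each fixed $\bfp$, the argument $\bfci\big(\bfcr_n[\mhden],\bfcr_n[\bfp]\big)(\bfr)$ lies in $\otimes\bfcs_n$, so the inductive hypothesis applies and rewrites the corresponding $D_n$-term as an intersection over $\bfu\in\otimes\bfcb(n)$. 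This gives
\[
D_{n+1}(\bfr) = \bigcap_{\bfp\in\otimes\bfcb(1)} \bigcap_{\bfu\in\otimes\bfcb(n)} D\Big( H_\bfu\big(\bfci(\bfcr_n[\mhden],\bfcr_n[\bfp])(\bfr)\big)\Big).
\]
By formula~(\ref{e49}), the composition $H_\bfu\circ \bfci(\bfcr_n[\mhden],\bfcr_n[\bfp])$ equals $H_{\bfu^{\con}\bfp}$, so each inner set is simply $D\big(H_{\bfu^{\con}\bfp}(\bfr)\big)$. Reindexing via the bijection $(\bfu,\bfp)\mapsto \bfu^{\con}\bfp$ between $\otimes\bfcb(n)\times\otimes\bfcb(1)$ and $\otimes\bfcb(n+1)$ collapses the double intersection into $\bigcap_{\bfv\in\otimes\bfcb(n+1)} D\big(H_\bfv(\bfr)\big)$, completing the induction.

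There is no real obstacle here; the fact is essentially a bookkeeping unfolding of the definitions. The only points requiring care are (i) confirming that the inductive hypothesis may be applied pointwise inside the intersection (which is immediate because $\bfci(\bfcr_n[\mhden],\bfcr_n[\bfp])(\bfr)\in\otimes\bfcs_n$), and (ii) verifying that concatenation indeed yields a bijection on the relevant index sets, which follows from the definitions of $\otimes\bfcb$ and its levels given at the start of~\S 5.
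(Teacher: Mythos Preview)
Your proof is correct and follows essentially the same approach as the paper: induction on $n$, invoking (C8) to unfold $D_{n+1}$, applying the inductive hypothesis to each $D_n$-term, using the composition identity~(\ref{e49}), and then reindexing via the concatenation bijection $(\bfu,\bfp)\mapsto\bfu^{\con}\bfp$. The only cosmetic difference is that the paper cites condition~(C6) for the base case whereas you cite the definitions $\bfcs_0=\bfcs$, $D_0=D$ directly, which amounts to the same thing.
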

\begin{proof}
The proof proceeds by induction on $n$. For $n=0$ the desired identity follows immediately by condition (C6) in Step 3 and the fact
that $H_{\bfcb(0)}$ is the identity map on $\otimes\bfcs_0$. Assume that the result has been proved for some $n\in\nn$.
Let $\bfs\in\otimes\bfcs_{n+1}$ be arbitrary. Recall that $\bfcs_{n+1}=\bfcr_n[\mhden]$ and that $\bfcr_n$
is a vector strong subtree of $\bfcs_n$. Therefore,
\begin{eqnarray*}
D_{n+1}(\bfs) & \stackrel{(\ref{e46})}{=} & \bigcap_{\bfp\in\otimes\bfcb(1)} D_n\Big( \bfci\big(\bfcr_n[\mhden],\bfcr_n[\bfp]\big)(\bfs)\Big)\\
& = & \bigcap_{\bfp\in\otimes\bfcb(1)} \bigcap_{\bfu\in\otimes\bfcb(n)} D\Big( H_\bfu\Big( \bfci\big(\bfcr_n[\mhden],\bfcr_n[\bfp]\big) (\bfs)\Big)\Big)\\
& \stackrel{(\ref{e49})}{=} & \bigcap_{\bfp\in\otimes\bfcb(1)} \bigcap_{\bfu\in\otimes\bfcb(n)}
D\big( H_{\bfu^{\con}\bfp}(\bfs)\big) =\bigcap_{\bfu'\in\otimes\bfcb(n+1)} D\big( H_{\bfu'}(\bfs)\big)
\end{eqnarray*}
where the second equality follows by our inductive assumption. The proof of Fact \ref{f43} is completed.
\end{proof}

\subsection*{Step 5: an infinite chain of $(T_1,...,T_d)$ and an ``unfolding" argument}

Let $(l_n)$ be the strictly increasing sequence in $\nn$ and $(\bfcr_n)$ the sequence of vector strong subtrees
of $\bfcs$ obtained in Step 3. For every $n\in\nn$ we set
\begin{equation} \label{e410}
\bfr_{n}=\bfcr_n(0)
\end{equation}
and we write $\bfr_n=(r^n_1,...,r^n_d)$. Recall that a subset $C$ of a tree $(T,<)$ is said to be a \textit{chain} if for every
$s,t\in C$ we have that either $t\mik s$ or $s\mik t$.
\begin{lem} \label{l44}
For every $i\in\{1,...,d\}$ the family $\{r^n_i:n\in\nn\}$ is an infinite chain of the tree $T_i$. Moreover, for every $n\in\nn$
we have $\ell_{T_i}(r^n_i)=l_n$.
\end{lem}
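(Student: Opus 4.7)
The plan is to establish both conclusions by induction on $n$, keeping the two statements essentially separate. The chain claim will follow directly from the recursive rule $\bfcs_{n+1}=\bfcr_n[\mhden]$, which forces the root of $\bfcs_{n+1}$ to sit strictly above $\bfr_n$ in each coordinate. The level claim will require combining several identities from Steps~3 and~4, in particular Fact~\ref{f43}, together with the level-preservation property~(P3) of the canonical embeddings $h^u_i$.

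I would begin with the chain claim. By construction $\bfcr_n[\mhden]=\big(R^n_1[0],\ldots,R^n_d[0]\big)$, and by definition (\ref{e42}) we have $R^n_i[0]=\suc_{R^n_i}(r^n_i{}^{\con_{R^n_i}}\!0)$. Since $\bfcr_{n+1}$ is a vector strong subtree of $\bfcs_{n+1}=\bfcr_n[\mhden]$, its root $\bfr_{n+1}$ lies in $\otimes\bfcr_n[\mhden]$, so $r^{n+1}_i\in R^n_i[0]$ and therefore $r^n_i{}^{\con_{R^n_i}}\!0\mik r^{n+1}_i$ in $R^n_i$, hence in $T_i$. As $r^n_i{}^{\con_{R^n_i}}\!0$ is a strict successor of $r^n_i$ in $T_i$, we conclude $r^n_i< r^{n+1}_i$. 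Iterating gives a strictly increasing, hence infinite, chain in $T_i$.

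Next I would handle the level claim, also by induction. For the base case $n=0$, recall that $D\colon\otimes\bfcs\to 2^W$ was obtained in Step~1 from the original set $D'\subseteq$ the level product of $(T_1,\ldots,T_d,W)$, which means $D(\bfs)\subseteq W(\ell_{\bfct}(\bfs))$ for every $\bfs\in\otimes\bfcs$. Since $w_{\varnothing}\in D_0(\bfr_0)=D(\bfr_0)$ and $l_0=\ell_W(w_{\varnothing})$, we obtain $l_0=\ell_{T_i}(r^0_i)$. For the inductive step, assume $\ell_{T_i}(r^n_i)=l_n$. By the construction in Step~3, the node $w_{v{}^{\con}\!p}$ (for any $v\in b_W^n$, $p\in\{0,\ldots,b_W-1\}$) lies in $D_{v,p}(\bfr_{n+1})\subseteq D_{n+1}(\bfr_{n+1})$. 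Fact~\ref{f43} gives
\[ D_{n+1}(\bfr_{n+1})=\bigcap_{\bfu\in\otimes\bfcb(n+1)} D\big(H_\bfu(\bfr_{n+1})\big)\subseteq D\big(H_\bfu(\bfr_{n+1})\big) \]
for each $\bfu=(u_1,\ldots,u_d)$. The image $H_\bfu(\bfr_{n+1})$ has $i$-th coordinate $h^{u_i}_i(r^{n+1}_i)$, and by property (P3) this preserves level in $T_i$; hence $H_\bfu(\bfr_{n+1})\in\otimes\bfct\big(\ell_{T_i}(r^{n+1}_i)\big)$. Since $D$ respects levels as noted above, we deduce $D\big(H_\bfu(\bfr_{n+1})\big)\subseteq W\big(\ell_{T_i}(r^{n+1}_i)\big)$, whence $\ell_W(w_{v{}^{\con}\!p})=\ell_{T_i}(r^{n+1}_i)$. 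Combined with (C3), which gives $\ell_W(w_{v{}^{\con}\!p})=l_{n+1}$, this yields $\ell_{T_i}(r^{n+1}_i)=l_{n+1}$, completing the induction.

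There is no real obstacle here: both statements reduce to unpacking the definitions from Steps~2--4 in the right order. The only mild care is distinguishing the level in $T_i$ from the level in $\bfcs_n$ or in $\bfcr_n$, which is exactly why property (P3) and Fact~\ref{f43} were highlighted—together they convert information about the auxiliary dense level selection $D_{n+1}$ back into $T_i$-levels of nodes actually sitting inside $\bfct$.
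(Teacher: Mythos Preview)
Your proof is correct, and the chain argument is essentially the paper's. For the level identity, however, the paper's route is more direct: from (C7) one has $w_v\in D_n(\bfr_n)$, from (C6) $D_n(\bfr_n)\subseteq D(\bfr_n)$, hence $(r^n_1,\ldots,r^n_d,w_v)\in D$, and since $D$ sits in the level product and (C3) gives $\ell_W(w_v)=l_n$, the identity $\ell_{T_i}(r^n_i)=l_n$ follows for every $n$ at once. Your detour through Fact~\ref{f43} and property~(P3) recovers exactly the inclusion $D_{n+1}(\bfr_{n+1})\subseteq D(\bfr_{n+1})$ that (C6) already gives; note also that your ``induction hypothesis'' for the level claim is stated but never used, so the inductive framing is superfluous. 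Nothing is wrong, just heavier than necessary.
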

\begin{proof}
Let $n\in\nn$ be arbitrary. As in Step 4, we write $\bfcr_n=(R^n_1,...,R^n_d)$. By conditions (C1) and (C2) in Step 3,
we see that $r^{n+1}_i\in\suc_{R^n_i}(r^n_i)\subseteq \suc_{T_i}(r^n_i)$. This shows that the family $\{r^n_i:n\in\nn\}$
is an infinite chain of $T_i$. Also notice that, by conditions (C6) and (C7), we have $(r^n_1,...,r^n_d,w_v)\in D$ for every
$v\in b_W^n$. Invoking condition (C3), we conclude that $\ell_{T_i}(r^n_i)=l_n$. The proof of Lemma \ref{l44} is completed.
\end{proof}
Let $i\in\{1,...,d\}$ and consider the family $\{h^u_i:u\in b_i^{<\nn}\}$ of canonical embeddings defined in Step 4.
We define a map $\Phi_i:b_i^{<\nn}\to T_i$ as follows. For every $n\in\nn$ and every $u\in b_i^n$ we set
\begin{equation} \label{e411}
\Phi_i(u)= h^u_i( r^n_i).
\end{equation}
That is, for every $n\in\nn$ the family $\{\Phi_i(u):u\in b_i^n\}$ is the ``orbit" of the node $r^n_i$ under the family
of maps $\{h^u_i:u\in b_i^n\}$.
\begin{lem} \label{l45}
For every $i\in\{1,...,d\}$ the map $\Phi_i:b_i^{<\nn}\to T_i$ is a canonical embedding. Moreover, for every $n\in\nn$
and every $u\in b_i^n$ we have
\begin{equation} \label{e412}
\ell_{T_i}\big(\Phi_i(u)\big)=l_n.
\end{equation}
\end{lem}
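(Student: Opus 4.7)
The plan is to verify the three conditions (a), (b), (c) defining a canonical embedding together with the level formula \eqref{e412}, using properties (P1)--(P3) of the family $\{h^u_i\}$ and the structural facts about the sequences $(\bfcr_n)$ and $(l_n)$ established in Step~3.

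First I would dispose of the level formula \eqref{e412}. By Lemma \ref{l44} we have $\ell_{T_i}(r^n_i)=l_n$, and by property (P3) applied to $s=r^n_i$ we obtain
\[
\ell_{T_i}\big(\Phi_i(u)\big)=\ell_{T_i}\big(h^u_i(r^n_i)\big)=\ell_{T_i}(r^n_i)=l_n.
\]
Since $(l_n)$ is strictly increasing, this yields condition~(a): $\ell(u)=\ell(u')$ in $b_i^{<\nn}$ iff $\Phi_i(u)$ and $\Phi_i(u')$ have the same length in $T_i$. Condition~(c) is then immediate from property~(P2): if $u,u'\in b_i^n$ with $u\lex u'$, then $h^u_i(r^n_i)\lex h^{u'}_i(r^n_i)$, i.e.\ $\Phi_i(u)\lex \Phi_i(u')$; the converse follows by the trichotomy of $\lex$.

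The main work lies in condition~(b). For the forward direction I would argue by induction on the length difference, reducing to the case $u'=u^{\con}p$ with $u\in b_i^n$ and $p\in\{0,\ldots,b_i-1\}$. By \eqref{e47},
\[
\Phi_i(u^{\con}p)=h^u_i\Big(\ci\big(R^n_i[0],R^n_i[p]\big)(r^{n+1}_i)\Big).
\]
Since $\bfcs_{n+1}=\bfcr_n[\mhden]$ (condition (C2)) and $\bfcr_{n+1}$ is a vector strong subtree of $\bfcs_{n+1}$ (condition (C1)), we have $r^{n+1}_i\in R^n_i[0]=\suc_{R^n_i}((r^n_i)^{\con_{R^n_i}}\!0)$, whence $r^n_i\sqsubset r^{n+1}_i$. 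The canonical isomorphism $\ci(R^n_i[0],R^n_i[p])$ sends $r^{n+1}_i\in R^n_i[0]$ to a node in $R^n_i[p]=\suc_{R^n_i}((r^n_i)^{\con_{R^n_i}}\!p)$, which is strictly above $r^n_i$ in $T_i$. Applying the canonical embedding $h^u_i$ (property~(P1), which preserves $\sqsubset$) gives $\Phi_i(u)=h^u_i(r^n_i)\sqsubset\Phi_i(u^{\con}p)$.

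The converse direction of (b) is what I expect to be the main obstacle, but it can be handled by the following contradiction argument. Suppose $\Phi_i(u)\sqsubset\Phi_i(u')$ but $u\not\sqsubset u'$; by condition~(a) we have $n=\ell(u)<\ell(u')=m$, so the only alternative is that $u$ and $u'$ are $\sqsubset$-incomparable. Let $v$ be their maximal common prefix and write $v^{\con}p\sqsubseteq u$ and $v^{\con}q\sqsubseteq u'$ with $p\neq q$. The forward direction of (b), already proven, yields $\Phi_i(v^{\con}p)\sqsubseteq\Phi_i(u)$ and $\Phi_i(v^{\con}q)\sqsubseteq\Phi_i(u')$. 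Combining with $\Phi_i(u)\sqsubset\Phi_i(u')$, both $\Phi_i(v^{\con}p)$ and $\Phi_i(v^{\con}q)$ sit below $\Phi_i(u')$ in $T_i$; since they have the common length $l_{\ell(v)+1}$ by \eqref{e412}, they must coincide. But condition~(c) (combined with trichotomy of $\lex$) ensures $\Phi_i$ is injective on each level $b_i^{\ell(v)+1}$, forcing $p=q$, a contradiction. Hence $u\sqsubset u'$, completing the proof that $\Phi_i$ is a canonical embedding.
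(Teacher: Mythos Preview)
Your proof is correct and follows essentially the same route as the paper's. Both arguments reduce to the key computation $\Phi_i(u^{\con}p)=h^u_i\big(\ci(R^n_i[0],R^n_i[p])(r^{n+1}_i)\big)\in\suc_{T_i}\big(h^u_i(r^n_i)\big)=\suc_{T_i}\big(\Phi_i(u)\big)$ via (P1), and both obtain \eqref{e412} from (P3) together with Lemma~\ref{l44}. The paper simply asserts that, granted (P2), (P3) and Lemma~\ref{l44}, this successor relation is all one needs to check, whereas you unpack that reduction explicitly---verifying conditions (a) and (c) and then handling the converse direction of (b) by the prefix/injectivity argument; this extra detail is correct and only makes the implicit step in the paper visible.
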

\begin{proof}
Let $i\in\{1,...,d\}$ be arbitrary. First notice that, by property (P2) in Step 4 and Lemma \ref{l44}, condition (a) in \S 2.5
and equality (\ref{e412}) are both satisfied. To show that condition (b) in \S 2.5 is satisfied we need to prove that for every $n\in\nn$, 
every $u\in b_i^n$ and every $p\in\{0,...,b_i-1\}$ we have that $\Phi_i(u^{\con}p)\in\suc_{T_i}\big(\Phi_i(u)^{\con_{T_i}}p\big)$. To this
end let
\[ w=r^n_i \ \text{ and } \ r=\ci\big(R^n_i[0],R^n_i[p]\big)(r^{n+1}_i). \]
By (\ref{e47}) and (\ref{e411}), we see that $\Phi_i(u)=h^u_i(w)$ and $\Phi_i(u^{\con}p)=h^u_i(r)$. Also observe that
$r\in\suc_{T_i}(w^{\con_{T_i}}p)$. By property (P1) in Step 4, the map $h^u_i$ is a canonical embedding. Therefore,
$h^u_i(r)\in\suc_{T_i}\big(h^u_i(w)^{\con_{T_i}}p\big)$. The proof of Lemma \ref{l45} is completed.
\end{proof}

\subsection*{Step 6: the end of the proof}

For every $i\in\{1,...,d\}$ we set
\begin{equation} \label{e413}
Z_i=\{\Phi_i(u):u\in b_i^{<\nn}\}
\end{equation}
where $\Phi_i$ is the canonical embedding defined in (\ref{e411}). Also, we set
\begin{equation} \label{e414}
V=\{w_v:v\in b_W^{<\nn}\}
\end{equation}
where $\{w_v:v\in b_W^{<\nn}\}$ is the family obtained in part (e) of the construction presented in Step 3.
By conditions (C3) and (C4), we see that $V$ is a strong subtree of $W$ and $L_W(V)=\{l_n:n\in\nn\}$. Moreover, by Lemma
\ref{l45}, $Z_i$ is a strong subtree of $T_i$ and $L_{T_i}(Z_i)=\{l_n:n\in\nn\}$ for every $i\in\{1,...,d\}$.
It follows that $(Z_1,...,Z_d,V)$ is a vector strong subtree of $(T_1,...,T_d,W)$. The proof will be completed once we show
that the level product of $(Z_1,...,Z_d,V)$ is contained in $D$.

So, let $(z_1,...,z_d,v)$ be an arbitrary element of the level product of $(Z_1,...,Z_d,V)$. There exist
$n\in\nn$, $\bfu_0=(u_1,...,u_d)\in\otimes\bfcb(n)$ and $v_0\in b_W^n$ such that $v=w_{v_0}$ and
$z_i=\Phi_i(u_i)$ for every $i\in\{1,...,d\}$. Notice that
\[ \big(\Phi_1(u_1),...,\Phi_d(u_d)\big) \stackrel{(\ref{e411})}{=}
\big(h^{u_1}_1(r^n_1),...,h^{u_d}_d(r^n_d)\big) \stackrel{(\ref{e48})}{=}
H_{\bfu_0}(\bfr_n) \stackrel{(\ref{e410})}{=} H_{\bfu_0}\big(\bfcr_n(0)\big).\]
By condition (C7), the pair $(\bfcr_n,w_{v_0})$ is strongly $\theta_n$-correlated with respect to $D_n$.
Therefore, $w_{v_0}\in D_n\big(\bfcr_n(0)\big)$. By Fact \ref{f43}, we get that
\[ w_{v_0}\in D_n\big(\bfcr_n(0)\big)\subseteq D\big(H_{\bfu_0}\big(\bfcr_n(0)\big)\big).\]
Summing up, we conclude that
\[ (z_1,...,z_d,v)=\big(\Phi_1(u_1),...,\Phi_d(u_d), w_{v_0}\big)\in D.\]
The proof of Theorem \ref{t12} is thus completed.


\section{Comments}

Using a standard compactness argument we get the following finite version of Theorem \ref{t12}.
\begin{thm} \label{t51}
For every integer $d\meg 1$, every $b_1,...,b_d\in\nn$ with $b_i\meg 2$ for all $i\in\{1,...,d\}$, every integer $k\meg 1$, every real $0<\ee\mik 1$
and every infinite subset $M=\{m_0<m_1<...\}$ of $\nn$ there exists an integer $N$ with the following property. If $\bfct=(T_1,...,T_d)$
is a vector homogeneous tree with $b_{\bfct}=(b_1,...,b_d)$ and $D$ is a subset of the level product of $(T_1,...,T_d)$ satisfying
\[ |D\cap \big(T_1(m_n)\times ...\times T_d(m_n)\big)| \meg \ee |T_1(m_n)\times ...\times T_d(m_n)| \]
for every $n\mik N$, there exists a finite vector strong subtree $\bfcs$ of $\bfct$ of height $k$ such that the level product
of $\bfcs$ is a subset of $D$. The least integer $N$ with this property will be denoted by $\dhl(b_1,...,b_d|k,\ee,M)$.
\end{thm}
Notice, however, that the reduction of Theorem \ref{t51} to Theorem \ref{t12} via compactness is noneffective and gives no estimate
for the numbers $\dhl(b_1,...,b_d|k,\ee,M)$. The natural problem of getting explicit upper bounds for the ``density Halpern--L\"{a}uchli
numbers" is studied in \cite{DKT1}.


\end{document}